\documentclass[12pt]{amsart}   
\linespread{1.1}           
\usepackage{amsmath,amsthm,amssymb}
\textwidth=455pt \evensidemargin=8pt \oddsidemargin=8pt
\marginparsep=8pt \marginparpush=8pt \textheight=700pt       
\topmargin=-20pt

\newdimen\slantmathcorr
\def\oversl#1{
\setbox0=\hbox{$#1$}
\slantmathcorr=\wd0
\hskip 0.2\slantmathcorr \overline{\hbox to 0.8\wd0{
\vphantom{\hbox{$#1$}}}}
\hskip-\wd0\hbox{$#1$}
}

\newtheorem{theorem}{Theorem}[section]
\newtheorem{Lemma}[theorem]{Lemma}

\newtheorem{Corollary}[theorem]{Corollary}
\newtheorem{Definition}[theorem]{Definition}
\newtheorem{Example}[theorem]{Example}

\begin{document} 
\title{Measure expansivity and specification for Pointwise dynamics}     
\author{Pramod Das, Abdul Gaffar Khan, Tarun Das}                
\maketitle          
\begin{abstract}
We introduce pointwise measure expansivity for bi-measurable maps. We show through examples that this notion is weaker than measure expansivity. In spite of this fact, we show that many results for measure expansive systems hold true for pointwise systems as well. Then, we study the concept of mixing, specification and chaos at a point in the phase space of a continuous map. We show that mixing at a shadowable point is not sufficient for it to be a specification point, but mixing of the map force a shadowable point to be a specification point. We prove that periodic specification points are Devaney chaotic point. Finally, we show that existence of two distinct specification points is sufficient for a map to have positive Bowen entropy.      
\end{abstract} 
\medskip

Mathematics Subject Classifications (2010): 54H20, 37C50, 37B40                  
\medskip

Keywords: Expansivity, Specification, Devaney chaos, Bowen Entropy.         
\medskip

\section{Introduction}
\medskip

One of the extensively studied topological dynamical notions for homeomorphisms is widely known as expansivity which was introduced \cite{U} by Utz in the middle of the twentieth century. This notion says that we can choose a fixed $\delta>0$ such that for each $x\in X$, the orbit of any point get separated from the orbit of $x$ by the constant $\delta$. In \cite{R1970}, Reddy constructed homeomorphisms which does not fulfil this requirement but satisfy weaker condition. He called such homeomorphisms as pointwise expansive homeomorphisms. In particular, for given $x\in X$, he allowed to choose different $\delta_x>0$ by which other orbits get separated from the orbit of $x$. He observed that many results which are true for expansive homeomorphisms are also true for pointwise expansive homeomorphisms. 
\medskip

In recent years, Morales has extended \cite{MS} the notion of expansivity for homeomorphisms to Borel measures. He has called a Borel measure expansive if we can choose a fixed $\delta>0$ such that for each $x\in X$, the orbit of all points except possibly a set of measure zero get separated from the orbit of $x$ by the constant $\delta$. If this condition is satisfied by any non-atomic Borel measure on the phase space of a homeomorphism, then it is called a measure expansive homeomorphism. In this paper, we introduce pointwise expansivity for Borel measures by allowing different $\delta_x>0$ for given $x\in X$ such that the orbit of all points except possibly a set of measure zero get separated from the orbit of $x$ by the constant $\delta_x$. Further, if this condition is satisfied by any non-atomic Borel measure on the phase space of a bi-measurable map, then the map is called pointwise measure   
\par\noindent\rule{\textwidth}{0.4pt}
Department of Mathematics, Faculty of Mathematical Sciences, 
\\
University of Delhi, New Delhi-110007
\\ 
tarukd@gmail.com (Tarun Das), pramod.math.ju@gmail.com, gaffarkhan18@gmail.com 

expansive. Although we have been able to realize by examples that this notion is weaker than measure expansivity, many results of measure expansive systems hold true for pointwise systems as well. 
\medskip

It is always fascinating to find easier way to understand interesting behaviours of a dynamical system. For example, positive entropy of a system indicates chaotic behaviour, but it is one of the most difficult job to find entropy of a system. Therefore, mathematicians started working to understand whether presence of some dynamical notion in the system implies positive entropy. In \cite{AT}, it was proved that every homeomorphism on a compact metric space possessing specification property has positive topological entropy. In \cite{AC}, authors have shown that $P$-chaotic map on a continuum has positive entropy. In \cite{M}, Moothathu has proved that continuous self-map on a compact metric space possessing shadowing property having either a non-minimal recurrent point or a sensitive minimal subsystem, has positive topological entropy. 
\medskip

In spite of the development of such significant literature on global behaviour of a dynamical system, mathematicians started working in finding those local behaviours which provides information about the global behaviour. In \cite{XZ}, authors have introduced the concept of entropy points and proved that existence of such a point implies positive entropy. Very recently, Kawaguchi has introduced \cite{KQS} a weaker version of shadowable points \cite{MSP} called $e$-shadowable points and in a subsequent paper \cite{KPSP} has shown that existence of certain kind of $e$-shadowable points implies positive entropy. In a preprint \cite{AR}, Arbieto and Rego have shown that continuous map on a compact metric space having non-periodic shadowable, positive uniformly countable expansive, non-wandering point whose every neighborhood is uncountable has positive topological entropy. Here, we show that existence of two distinct specification points implies positive Bowen entropy. Besides that we have shown that every shadowable point of a topologically mixing continuous map is a specification point and that every periodic specification point of a continuous surjective map is a Devaney chaotic point.
\medskip

The notion of specification is one of the most important and extensively studied variant of shadowing. In shadowing one traces an approximate orbit but specification guarantees simultaneous tracing of finite number of finite pieces of orbits by one periodic orbit. Such kind of tracing by a periodic orbit helped Bowen to study \cite{B} the distribution of periodic points in the phase space of a system with specification property. The periodicity of the tracing point made this notion to be popularly known as periodic specification property among many other variants (\cite{KO},\cite{S}).  

\medskip

This paper is presented as follows. In section 2, the essential preliminaries are supplied to make the paper self-contained. In section 3, we have done a qualitative study of pointwise measure expansive systems and have constructed examples of such systems which are not measure expansive. In particular, we show that the set of points with converging semiorbits under a pointwise measure expansive homeomorphism on a separable metric space has measure zero with respect to any non-atomic Borel measure and every uniform equivalence of a separable metric space is aperiodic with respect to any pointwise expansive measure. We also show that the set of points positively asymptotic to a point and negatively asymptotic to another point under a pointwise measure expansive homeomorphism of a separable metric space has measure zero with respect to any non-atomic Borel measure. We further show that the set of heteroclinic points of a pointwise measure expansive uniform equivalence of a separable metric space has measure zero with respect to any non-atomic Borel measure, using the fact that the set of periodic points of such system is at most countable. We then show that the number of stable classes of a measurable map admitting positively pointwise expansive outer regular measure is uncountable. We further show that a bi-measurable map with canonical coordinates admitting strictly positive positively pointwise expansive measure has no point which is a sink. In section 4, we study periodic specification, topological mixing and Devaney chaos at a point. We prove that every shadowable point of a topologically mixing continuous map is a point with specification and that every periodic specification point of a continuous surjective map is a Devaney chaotic point. Further, we show that every uniformly continuous surjective map having two distinct specification points has positive Bowen entropy.

\section{PRELIMINARIES} 
Throughout the paper, $X$ denotes any metric space (unless otherwise stated). A point $x\in X$ is called an atom for a measure $\mu$ if $\mu(\lbrace x\rbrace)>0$. A measure $\mu$ on $X$ is said to be non-atomic if it has no atom. We call $X$ non-atomic if there exists a non-atomic Borel measure on it. A Borel measure is called strictly positive if every open set has positive measure.  
\medskip

A bi-measurable map $f$ on $X$ is called expansive \cite{U} if there is $\delta>0$ such that for any pair of distinct points $x,y\in X$, we have $d(f^n(x),f^n(y))>\delta$ for some $n\in\mathbb{Z}$. In other words, $\Gamma_{\delta}(x)=\lbrace x\rbrace$ for all $x\in X$, where $\Gamma_{\delta}(x)=\lbrace y\in X\mid d(f^n(x),f^n(y))\leq\delta$ for all $n\in\mathbb{Z}\rbrace$. A bi-measurable map $f$ on $X$ is measure-expansive \cite{MS} if for any non-atomic Borel measure $\mu$ on $X$, we have $\mu(\Gamma_{\delta}(x))=0$ for all $x\in X$. A bi-measurable map $f$ on $X$ is called pointwise expansive \cite{R1970} if for each $x\in X$ there is $\delta_{x}>0$ (depending on $x$) such that $\Gamma_{\delta_x}(x)=\lbrace x\rbrace$. 
\medskip

A subset $A\subset X$ is said to be $f$-invariant if $f^{-1}(A)= A$ and $f$ is said to be $\mu$-invariant if $\mu(f^{-1}(A)) = \mu(A)$ for every measurable subset $A\subset X$. A non-atomic Borel measure $\mu$ is said to be pointwise expansive for a bi-measurable map $f$ through a subset $\mathbb{H}$ of $\mathbb{Z}$, if for each $x\in X$ there is $\delta_{x} > 0$ such that $\mu(\Gamma^{\mathbb{H}}_{\delta_{x}}(x)) = 0$, where $\Gamma^{\mathbb{H}}_{\delta_{x}}(x) = \lbrace y\in X : d(f^{n}(x), f^{n}(y))\leq \delta_{x}$ for all $n\in \mathbb{H}\rbrace$. 
\medskip

Let $f:X\rightarrow X$ be a bi-measurable map and let $\mu$ be a Borel measure on $X$. A finite open cover $\Lambda= \lbrace A_{1}, A_{2}, . . ., A_{n} \rbrace$ of a topological space $X$ is said to be a $\mu$-generator at $x$ for $f$ if for any bi-sequence $\lbrace U_{n} \rbrace_{n \in \mathbb{Z}}$ of members of $\Lambda$ satisfying $x\in f^{-n}(\oversl{U_{n}})$ for all $n\in \mathbb{Z}$, $\mu(\cap_{n \in  \mathbb{Z}} f^{-n}(\oversl{U_{n}})) = 0$. We say that $f$ has pointwise $\mu$-generator if for each $x\in X$ there exists a $\mu$-generator at $x$ for $f$.     
\medskip
 
The $\omega$-limit set of a point $x\in X$ under a measurable map $f$ is given by $\omega(f,x)
\\
=\lbrace y\in X\mid$ lim$_{k\to\infty} d(f^{n_k}(x),y)=0$ for some strictly increasing sequence $(n_k)\rbrace$. 

The $\alpha$-limit set of a point $x\in X$ under a bi-measurable map $f$ is given by $\alpha(f,x)
\\
=\lbrace y\in X\mid$ lim$_{k\to\infty} d(f^{n_k}(x),y)=0$ for some strictly decreasing sequence $(n_k)\rbrace$.  
\medskip

We say that a point $x\in X$ has converging semiorbits under a bi-measurable map $f$ if both $\alpha(f,x)$ and $\omega(f,x)$ consist of single point. The set of such points under $f$ is denoted by $A(f)$.  
\medskip

A map $f$ is said to be aperiodic with respect to a measure $\mu$ if $\mu(B)=0$, whenever $B\subset X$ is such that there is $n\in\mathbb{N}^+$ satisfying $f^n(x)=x$ for all $x\in B$. 
\medskip

A point $y\in X$ is said to be positively asymptotic to $x\in X$ under a measurable map $f$ if for every $\epsilon>0$ there is a positive integer $N$ such that $d(f^n(x),f^n(y))<\epsilon$ for all $n\geq N$. The point $y\in X$ is said to be negatively asymptotic to $x\in X$ under a bi-measurable map $f$ if for every $\epsilon>0$ there is a positive integer $M$ such that $d(f^n(x),f^n(y))<\epsilon$ for all $n\leq -M$. The set of points positively asymptotic to $p$ and negatively asymptotic to $q$ under a bi-measurable map $f$ is denoted by $A_f(p,q)$. The point $y\in X$ is said to be doubly asymptotic to $p\in X$ under a bi-measurable map $f$ if $y$ is both positively and negatively asymptotic to $p$.   
\medskip

The stable set of a point $x\in X$ under a measurable map $f$ is given by  
\\
$W^s(x)=\lbrace y\in X\mid$ for all $\epsilon>0$, there is $n\in\mathbb{N}$ such that $d(f^i(x),f^i(y))\leq\epsilon$ for all $i\geq n\rbrace$. 

For $\delta>0$, the local stable set of a point $x\in X$ under a measurable map $f$ is given by
\\
$W^s(x,\delta)=\lbrace y\in X\mid d(f^i(x),f^i(y))\leq\delta$ for all $i\geq 0\rbrace$.   

The unstable set of a point $x\in X$ under a bi-measurable map $f$ is given by  
\\
$W^u(x)=\lbrace y\in X\mid$ for all $\epsilon>0$, there is $n\in\mathbb{N}$ such that $d(f^{i}(x),f^{i}(y))\leq\epsilon$ for all $i\leq -n\rbrace$. 

For $\delta>0$, the local unstable set of a point $x$ under a bi-measurable map is given by 
\\
$W^u(x,\delta)=\lbrace y\in X\mid d(f^i(x),f^i(y))\leq\delta$ for all $i\leq 0\rbrace$.    
\medskip

A point $x$ is said to be heteroclinic for a bi-measurable map $f$ if $x\in W^s(O(p))\cap W^u(O(q))$. A point $x$ is called a sink for a bi-measurable map $f$ if $W^u(x,\delta)=\lbrace x\rbrace$ for some $\delta>0$. The map $f$  is said to have canonical coordinates if for every $\epsilon>0$ there is $\delta>0$ such that $d(x,y)<\delta$ implies $W^s(x,\epsilon)\cap W^u(y,\epsilon)\neq \phi$.  
\medskip

For a continuous map $f$ on a metric space $(X,d)$, recall the following topological dynamical notions.  
\\
(i) $f$ is sensitive if there exists $\delta > 0$ such that for every point $x\in X$ and every neighborhood $U$ of $X$ there exists $y\in U$ such that $d(f^{n}(x), f^{n}(y)) > \delta$ for some $n\geq 1$.    
\\
(ii) $f$ is said to be topologically transitive if for every pair of non-empty open sets $U$ and $V$ there exists $n\in \mathbb{N}$ such that $f^{n}(U)\cap V\neq \phi$. 
\\
(iii) $f$ is said to be topologically mixing if for every pair of non-empty open sets $U$ and $V$ there exists $N\in \mathbb{N}$ such that $f^{n}(U)\cap V\neq \phi$ for all $n\geq N$. 
\\
(iv) $f$ is said to be Devaney chaotic if it is topologically transitive, has dense set of periodic points in $X$ and is sensitive.    
\\
(v) $f$ is said to have specification (resp. periodic specification) property if for every $\epsilon > 0$ there exists a positive integer $M(\epsilon)$ such that for any finite sequence $x_{1}, x_{2}, . . ., x_{k}$ in $X$, any integers $0\leq a_{1}\leq b_{1} < a_{2}\leq b_{2} < . . .< a_{k}\leq b_{k}$ with $a_{j} - b_{j-1} \geq M(\epsilon)$ for all $1\leq j \leq k$, there exists $y\in X$ (resp. $y\in Per(f)$) such that $d(f^{i}(y), f^{i}(x_{j})) < \epsilon$ for all $a_{j}\leq i\leq b_{j}$ and all $1\leq j\leq k$.
\\
(v) For $\delta > 0$, any sequence $\lbrace x_{i}\rbrace_{i\in \mathbb{N}}$ is said to be a $\delta$-pseudo orbit for $f$ if $d(f(x_{i}), x_{i+1}) < \delta$ for all $i\in \mathbb{N}$. For $\epsilon > 0$, any sequence $\lbrace x_{i}\rbrace_{i\in \mathbb{N}}$ is said to be $\epsilon$-traced if there exists a point $x\in X$ such that $d(f^{i}(x), x_{i}) < \epsilon$ for all $i\in \mathbb{N}$. The map $f$ is said to have shadowing if for every $\epsilon>0$ there exists $\delta>0$ such that every $\delta$-pseudo orbit is $\epsilon$-traced by some point in $X$.    
\medskip
 
Let $f$ be a uniformly continuous surjective map on a metric space $(X,d)$. Let $n\in \mathbb{Z}^{+}$, $\epsilon > 0$ and $K\in \mathcal{K}(X)$, where  $\mathcal{K}(X)$ denotes the set of all compact subsets of $X$. Set $d_{n}(x, y) = $max$_{0\leq i\leq (n-1)}d(f^{i}(x), f^{i}(y))$. A subset $F$ of $K$ is said to $(n, \epsilon)$-separated subset of $K$ if for every pair of distinct points $x, y\in E$, $d_{n}(x, y) > \epsilon$. The number $s_{n}(\epsilon, K)$ denotes the largest cardinality of $(n, \epsilon)$-separated subset of $K$. Set $s(\epsilon, K) = $lim sup$_{n\rightarrow \infty}\frac{1}{n}log(s_{n}(\epsilon, K))$. The entropy of $f$ is defined as $h(f) = sup_{K\in \mathcal{K}(X)} $lim$_{\epsilon \rightarrow 0} s(\epsilon, K)$. For deeper understanding of this notion of entropy due to Bowen one may refer to \cite{W}.     
\section{Pointwise measure expansivity} 
In this section, we study the following notions which are seen to be pointwise versions of measure expansivity.  
\\
(i) Let $X$ be a metric space and let $f$ be a bi-measurable map on $X$. Then, a non-atomic Borel measure $\mu$ on $X$ is said to be pointwise (resp. positively pointwise) expansive for $f$, if for each $x\in X$ there exists $\delta_x>0$ such that $\mu(\Gamma_{\delta_x}(x))=0$ (resp. $\mu(\Phi_{\delta_x}(x))=0$), where $\Phi_{\delta_x}(x)=\lbrace y\in X\mid d(f^n(x),f^n(y))\leq \delta_x$ for all $n\in\mathbb{N}\rbrace$.   
\\
(ii) Let $X$ be a non-atomic metric space and let $f$ be a bi-measurable map on $X$. Then, $f$ is said to be pointwise measure expansive if for each $x\in X$ there is $\delta_x>0$ (depending on $x$) such that $\mu(\Gamma_{\delta_x}(x))=0$ for any non-atomic Borel measure $\mu$. 
\\
(iii) Let $X$ be a metric space and let $f$ be a bi-measurable map on $X$. Then, $f$ is said to be strongly pointwise measure expansive if for each $x\in X$ there is $\delta_x>0$ (depending on $x$) such that $\mu(\Gamma_{\delta_x}(x))=\mu(\lbrace x\rbrace)$ for any Borel measure $\mu$ on $X$.
\\
(iv) Let $X$ be a metric space and let $f$ be a bi-measurable map on $X$. Then, $f$ is said to be pointwise $N$-expansive if for each $x\in X$, there exists $\delta_x>0$ such that $\mid\Gamma_{\delta_x}(x)\mid\leq N$.
\begin{theorem}
Let $f$ be a uniform equivalence on a metric space $X$ and let $\mu$ be a non-atomic Borel measure on $X$. Then, $\mu$ is pointwise expansive for $f$ if and only if $\mu$ is pointwise expansive for $f$ through $\mathbb{H}$, for every non-trivial subgroup of $\mathbb{Z}$. 
\label{3.1}
\end{theorem}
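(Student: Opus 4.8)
My plan is to prove the two implications separately: the reverse implication is immediate, and the forward implication reduces, via division with remainder in $\mathbb{Z}$, to a uniform-continuity estimate over the finitely many iterates $f^{0},\dots,f^{k-1}$.

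For the ``if'' direction I would simply observe that $\mathbb{Z}$ is itself a non-trivial subgroup of $\mathbb{Z}$ and that $\Gamma^{\mathbb{Z}}_{\delta}(x)=\Gamma_{\delta}(x)$ for every $\delta>0$ and every $x\in X$; hence the hypothesis, applied to $\mathbb{H}=\mathbb{Z}$, is literally the statement that $\mu$ is pointwise expansive for $f$.

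For the ``only if'' direction, recall that every non-trivial subgroup of $\mathbb{Z}$ is of the form $\mathbb{H}=k\mathbb{Z}$ for a unique $k\in\mathbb{N}^+$. Fix such an $\mathbb{H}$ and a point $x\in X$, and let $\delta_{x}>0$ be the constant provided by pointwise expansivity of $\mu$, so that $\mu(\Gamma_{\delta_{x}}(x))=0$. Since $f$ is a uniform equivalence, each of the maps $f^{0}=\mathrm{id},f^{1},\dots,f^{k-1}$ is uniformly continuous, so for each $r\in\{0,1,\dots,k-1\}$ I can choose $\gamma_{r}>0$ with $d(a,b)\le\gamma_{r}\Rightarrow d(f^{r}(a),f^{r}(b))\le\delta_{x}$, and then set $\delta_{x}'=\min\{\gamma_{0},\dots,\gamma_{k-1}\}>0$. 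The heart of the argument is the inclusion $\Gamma^{\mathbb{H}}_{\delta_{x}'}(x)\subseteq\Gamma_{\delta_{x}}(x)$: given $y$ in the left-hand set and any $m\in\mathbb{Z}$, write $m=kn+r$ with $n\in\mathbb{Z}$ and $0\le r<k$; since $kn\in\mathbb{H}$ we get $d(f^{kn}(x),f^{kn}(y))\le\delta_{x}'\le\gamma_{r}$, and applying $f^{r}$ yields $d(f^{m}(x),f^{m}(y))\le\delta_{x}$, so $y\in\Gamma_{\delta_{x}}(x)$. Monotonicity of $\mu$ then gives $\mu(\Gamma^{\mathbb{H}}_{\delta_{x}'}(x))\le\mu(\Gamma_{\delta_{x}}(x))=0$, and since $x\in X$ was arbitrary, $\mu$ is pointwise expansive for $f$ through $\mathbb{H}$.

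The only genuinely delicate point is securing a single modulus of continuity valid uniformly over the whole space $X$, which need not be compact; this is precisely where the hypothesis that $f$ is a uniform equivalence (rather than a mere homeomorphism) is used, and it suffices because only the finitely many forward iterates $f^{0},\dots,f^{k-1}$ enter the estimate. Bijectivity of $f$ is still needed so that $f^{kn}$ is meaningful for negative $n$, but uniform continuity of $f^{-1}$ plays no further role; everything else is routine bookkeeping.
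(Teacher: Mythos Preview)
Your proof is correct and follows essentially the same approach as the paper's: both use the coset decomposition $\mathbb{Z}=\bigcup_{r}(r+\mathbb{H})$ and uniform continuity of the finitely many forward iterates to obtain an inclusion $\Gamma^{\mathbb{H}}_{\delta'_x}(x)\subset\Gamma_{\delta_x}(x)$, and both dispose of the converse by taking $\mathbb{H}=\mathbb{Z}$. Your write-up is in fact more careful---you make the division-with-remainder explicit and note precisely which part of the uniform-equivalence hypothesis is actually used---but the underlying argument is the same.
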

\begin{proof}
Let $\mu$ be pointwise expansive at $x\in X$ with pointwise expansive constant $\delta_{x}$ and let $\mathbb{H}$ be a non-trivial subgroup of $\mathbb{Z}$. Then, we can choose positive integers $\lbrace 1, 2,$...,$k\rbrace$ such that $\mathbb{Z} =\bigcup \lbrace i+ \mathbb{H}\mid 1\leq i \leq k\rbrace$. By uniform continuity of $f$ there is $\epsilon_{x} > 0$ such that $d(a, b)\leq \epsilon_{x}$ implies that $d(f^{i}(a), f^{i}(b))\leq \delta_{x}$ for all $1\leq i\leq k$. Since for each $n\in\mathbb{Z}$ there is $i$ such that $ n = i + \mathbb{H}$, we have that $\Gamma^{\mathbb{H}}_{\epsilon_{x}}(x)\subset \Gamma_{\delta_{x}}(x)$ and hence $\mu(\Gamma^{\mathbb{H}}_{\epsilon_{x}}(x))= 0$. The converse follows from the definition. 
\end{proof}
\begin{Corollary}
Let $f$ be a uniform equivalence on a metric space $X$ and let $\mu$ be a non-atomic Borel measure on $X$. Then, $\mu$ is pointwise expansive for $f$ if and only if it is pointwise expansive for $f^m$ for any $m\in\mathbb{Z}\setminus \lbrace 0\rbrace$.    
\label{3.2}
\end{Corollary}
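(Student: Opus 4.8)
The plan is to obtain this as an immediate consequence of Theorem~\ref{3.1}. The key observation is that for each $m \in \mathbb{Z}\setminus\lbrace 0\rbrace$ the set $m\mathbb{Z}$ is a non-trivial subgroup of $\mathbb{Z}$, and that pointwise expansivity of $\mu$ for $f$ through $m\mathbb{Z}$ is literally the same statement as pointwise expansivity of $\mu$ for the bi-measurable map $f^m$.

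First I would fix $m \in \mathbb{Z}\setminus\lbrace 0\rbrace$ and a point $x\in X$, and note that for any $\delta > 0$,
\[
\Gamma^{m\mathbb{Z}}_{\delta}(x) = \lbrace y\in X : d(f^{mk}(x), f^{mk}(y))\leq \delta \text{ for all } k\in\mathbb{Z}\rbrace,
\]
since running $n$ over $m\mathbb{Z}$ is the same as running $mk$ over all $k\in\mathbb{Z}$. The right-hand side is precisely the set $\Gamma_{\delta}(x)$ associated with the bi-measurable map $f^m$. Consequently, the condition "there is $\delta_x>0$ with $\mu(\Gamma^{m\mathbb{Z}}_{\delta_x}(x)) = 0$ for every $x\in X$" means exactly "$\mu$ is pointwise expansive for $f^m$", with the same choice of constants $\delta_x$.

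Then I would invoke Theorem~\ref{3.1} with $\mathbb{H} = m\mathbb{Z}$: because $f$ is a uniform equivalence, $\mu$ is pointwise expansive for $f$ if and only if it is pointwise expansive for $f$ through $m\mathbb{Z}$, hence, by the identification above, if and only if it is pointwise expansive for $f^m$. The case $m < 0$ needs no separate treatment, since $m\mathbb{Z} = (-m)\mathbb{Z}$; equivalently, one may note that $\Gamma_{\delta}(x)$ is unchanged when $f$ is replaced by $f^{-1}$, which is again a uniform equivalence.

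The whole argument is bookkeeping, and I anticipate no real obstacle. The only points deserving a moment's care are that $\lbrace f^n : n\in m\mathbb{Z}\rbrace$ and $\lbrace (f^m)^k : k\in\mathbb{Z}\rbrace$ are the same family of maps, so that the two $\Gamma$-sets coincide on the nose, and that the assertion "$\mu$ is pointwise expansive for $f^m$" requires nothing of $f^m$ beyond bi-measurability, which is automatic since $f$ is bi-measurable.
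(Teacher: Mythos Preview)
Your proposal is correct and follows exactly the approach the paper intends: the corollary is stated immediately after Theorem~\ref{3.1} without proof, and your argument --- identifying $\Gamma^{m\mathbb{Z}}_{\delta}(x)$ for $f$ with $\Gamma_{\delta}(x)$ for $f^{m}$ and then invoking Theorem~\ref{3.1} with $\mathbb{H}=m\mathbb{Z}$ --- is precisely how the corollary is meant to be read off. Your remark handling $m<0$ via $m\mathbb{Z}=(-m)\mathbb{Z}$ is a clean way to dispose of that case.
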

\begin{theorem}
Let $f$ be a bi-measurable map on a compact metric space $X$ and let $\mu$ be a non-atomic Borel measure on $X$. Then, $\mu$ is pointwise expansive for $f$ if and only if $f$ has a pointwise $\mu$-generator.    
\end{theorem}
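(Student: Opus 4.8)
The plan is to prove the equivalence directly, invoking compactness of $X$ in the two usual ways such ``generator'' criteria require it: to produce a finite open cover of arbitrarily small mesh, and to extract a Lebesgue number of a finite open cover. In both directions the substance is the passage between the metric separation recorded by the sets $\Gamma_\delta(\cdot)$ and the combinatorial separation recorded by admissible bi-sequences over a finite open cover. I note at the outset that for any finite open cover $\Lambda$ and any bi-sequence $\{U_n\}_{n\in\mathbb{Z}}$ of its members, the set $\bigcap_{n\in\mathbb{Z}}f^{-n}(\overline{U_n})$ is Borel, being a countable intersection of preimages of closed sets under the bi-measurable iterates $f^n$; so the measures below are all taken of Borel sets.

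For the implication from pointwise expansivity to the existence of a pointwise $\mu$-generator, fix $x\in X$ and let $\delta_x>0$ satisfy $\mu(\Gamma_{\delta_x}(x))=0$. Using compactness, extract a finite subcover $\Lambda=\{A_1,\dots,A_k\}$ from the cover of $X$ by open balls of radius $\delta_x/3$, so that $\operatorname{diam}(\overline{A_i})<\delta_x$ for every $i$. Then $\Lambda$ is a $\mu$-generator at $x$: if $\{U_n\}_{n\in\mathbb{Z}}$ is any bi-sequence in $\Lambda$ with $f^n(x)\in\overline{U_n}$ for all $n$, and $y\in\bigcap_n f^{-n}(\overline{U_n})$, then $f^n(x)$ and $f^n(y)$ both belong to $\overline{U_n}$, so $d(f^n(x),f^n(y))\le\operatorname{diam}(\overline{U_n})<\delta_x$ for every $n\in\mathbb{Z}$, whence $y\in\Gamma_{\delta_x}(x)$. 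Thus $\bigcap_n f^{-n}(\overline{U_n})\subseteq\Gamma_{\delta_x}(x)$ has $\mu$-measure zero, which is exactly the defining property. As $x$ was arbitrary, $f$ has a pointwise $\mu$-generator.

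For the converse, fix $x\in X$ and a $\mu$-generator $\Lambda$ at $x$, let $\lambda>0$ be a Lebesgue number of $\Lambda$, and set $\delta_x=\lambda/3$. For each $n\in\mathbb{Z}$ the set $\{z\in X:d(z,f^n(x))\le\delta_x\}$ has diameter at most $2\delta_x<\lambda$, hence lies in some member $U_n$ of $\Lambda$; fix one such choice for every $n$ (it depends only on $x$). Then $f^n(x)\in\overline{U_n}$ for all $n$, so $\{U_n\}_{n\in\mathbb{Z}}$ is an admissible bi-sequence, and every $y\in\Gamma_{\delta_x}(x)$ satisfies $d(f^n(x),f^n(y))\le\delta_x$ and therefore $f^n(y)\in U_n\subseteq\overline{U_n}$ for all $n$, i.e.\ $y\in\bigcap_n f^{-n}(\overline{U_n})$. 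Hence $\Gamma_{\delta_x}(x)\subseteq\bigcap_n f^{-n}(\overline{U_n})$, and the right-hand side has measure zero by the defining property of a $\mu$-generator, so $\mu(\Gamma_{\delta_x}(x))=0$; since $x$ was arbitrary, $\mu$ is pointwise expansive for $f$. The step that needs care is precisely this converse: one must avoid trying to cover $\Gamma_{\delta_x}(x)$ by the union of all admissible intersections $\bigcap_n f^{-n}(\overline{U_n})$, since there may be uncountably many admissible bi-sequences and such a union carries no measure-theoretic content; the remedy is to exhibit, as above, a single admissible bi-sequence --- the one built from Lebesgue-sized balls along the orbit of $x$ --- whose associated intersection already contains all of $\Gamma_{\delta_x}(x)$.
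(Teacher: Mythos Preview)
Your proof is correct and follows essentially the same approach as the paper's: in one direction you cover $X$ by small balls to get the generator, and in the other you use a Lebesgue number to find a single admissible bi-sequence containing $\Gamma_{\delta_x}(x)$. Your choice of radii ($\delta_x/3$ and $\lambda/3$) is a bit more careful than the paper's stated constants, and your closing remark about needing a single bi-sequence rather than a union is a nice clarification, but the underlying argument is the same.
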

\begin{proof} Suppose that $\mu$ is pointwise expansive for $f$ and $\delta_{x}$ be a pointwise expansivity constant for $\mu$ at $x$. Choose an open cover $\Lambda$ of $X$ containing open-balls of radius $\delta_{x}$. Choose any bi-sequence $\lbrace U_{n} \rbrace_{n \in \mathbb{Z}}$ of members of $\Lambda$ satisfying $x\in f^{-n}(\oversl{U_{n}})$ for all $n\in \mathbb{Z}$. Clearly, $\cap_{n \in  \mathbb{Z}} f^{-n}(\oversl{U_{n}}) \subset \Gamma_{\delta_{x}}(x)$ which implies $\mu(\cap_{n \in  \mathbb{Z}} f^{-n}(\oversl{U_{n}})) \leq \mu(\Gamma_{\delta_{x}}(x)) = 0$. Thus, $\Lambda$ is a pointwise $\mu$-generator for $f$.  
\medskip

Conversely, suppose that $\Lambda$ is a pointwise $\mu$-generator for $f$ at $x$ and let $\delta_x > 0$ be its Lebesgue number. Choose a sequence $\lbrace U_{n} \rbrace_{n \in \mathbb{Z}}$ such that every closed $\delta_x$-ball around $f^{n}(x)$ is contained in $\oversl{U_{n}}$ for all $n\in \mathbb{Z}$. Thus, we have $\Gamma_{\delta_{x}}(x)\subset \cap_{n \in  \mathbb{Z}} f^{-n}(\oversl{U_{n}}) $ which implies $ \mu(\Gamma_{\delta_{x}}(x)) \leq \mu(\cap_{n \in  \mathbb{Z}} f^{-n}(\oversl{U_{n}})) = 0$, Thus, $\mu$ is pointwise expansive for $f$.  
\end{proof}

For $x,y\in X$, $n,m\in\mathbb{N}^+$, we define 
\begin{center} 
$A(x,y,n,m)=\lbrace z\in X\mid$ max$\lbrace d(f^{-i}(z),x),d(f^i(z),y)\rbrace\leq\frac{1}{n}$ for all $i\geq m\rbrace$.   
\end{center} 
\medskip

\begin{Lemma}
Let $X$ be a separable metric space and let $f:X\rightarrow X$ be a bi-measurable map, then there exists a sequence $\lbrace x_i\rbrace_{i\in\mathbb{N}}$ such that 
\begin{center}
$A(f)\subset\bigcap_{n\in\mathbb{N}^+}\bigcup_{k,k',m\in\mathbb{N}^+} A(x_k,x_{k'},n,m)$. 
\end{center}
\label{3.4}
\end{Lemma}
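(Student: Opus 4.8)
The plan is to take $\{x_i\}_{i\in\mathbb{N}}$ to be any countable dense subset of $X$ (which exists since $X$ is separable) and then to verify the displayed inclusion pointwise: I would fix an arbitrary $z\in A(f)$ and show that for every $n\in\mathbb{N}^+$ there exist $k,k',m\in\mathbb{N}^+$ with $z\in A(x_k,x_{k'},n,m)$, since then $z$ lies in the union over $k,k',m$ for each $n$, hence in the intersection over $n$.

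So let $z\in A(f)$. By definition $\alpha(f,z)$ and $\omega(f,z)$ are singletons, say $\alpha(f,z)=\{p\}$ and $\omega(f,z)=\{q\}$; I would then record that the backward and forward semiorbits of $z$ actually converge, i.e.\ $f^{-i}(z)\to p$ and $f^{i}(z)\to q$ as $i\to\infty$, so that for each $\varepsilon>0$ some forward tail $\{f^{i}(z):i\ge m\}$ stays within $\varepsilon$ of $q$ and, for a possibly larger $m$, some backward tail stays within $\varepsilon$ of $p$. Now fix $n\in\mathbb{N}^+$. Using density of $\{x_i\}$, choose $k',k\in\mathbb{N}^+$ with $d(x_{k'},q)<\tfrac{1}{2n}$ and $d(x_k,p)<\tfrac{1}{2n}$; using the convergence of the semiorbits, choose $m\in\mathbb{N}^+$ with $d(f^{i}(z),q)<\tfrac{1}{2n}$ and $d(f^{-i}(z),p)<\tfrac{1}{2n}$ for all $i\ge m$. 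The triangle inequality then gives $\max\{d(f^{-i}(z),x_k),d(f^{i}(z),x_{k'})\}\le\tfrac{1}{n}$ for all $i\ge m$, that is, $z\in A(x_k,x_{k'},n,m)$. Since $n$ was arbitrary, $z\in\bigcap_{n\in\mathbb{N}^+}\bigcup_{k,k',m\in\mathbb{N}^+}A(x_k,x_{k'},n,m)$, and since $z\in A(f)$ was arbitrary, the inclusion follows.

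Apart from this, the argument is pure triangle-inequality bookkeeping, so the one step to handle carefully is the passage from ``$\alpha(f,z)$ and $\omega(f,z)$ are singletons'' to genuine convergence of the semiorbits $f^{\pm i}(z)$: it is precisely this convergence that forces the semiorbit tails to shrink into a single $\tfrac1n$-ball centred at a point of the dense set, and hence to land inside some $A(x_k,x_{k'},n,m)$. In a general separable (non-compact) $X$ I would therefore invoke the definition of $A(f)$ in exactly the form that yields this convergence; everything else — the choice of the dense approximants $x_k,x_{k'}$, the threshold $m$, and the collapse of the two $\tfrac1{2n}$-estimates into the single $\max\le\tfrac1n$ bound defining $A(x_k,x_{k'},n,m)$ — is routine.
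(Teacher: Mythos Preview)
Your proposal is correct and follows essentially the same route as the paper: take $\{x_i\}$ to be a countable dense set, fix $z\in A(f)$ with $\alpha(f,z)=\{p\}$ and $\omega(f,z)=\{q\}$, choose $m$ so that both semiorbits are within $\tfrac{1}{2n}$ of $p,q$ respectively, pick dense approximants $x_k,x_{k'}$ within $\tfrac{1}{2n}$, and conclude by the triangle inequality. Your extra care about the passage from ``singleton limit sets'' to genuine convergence of the semiorbits is a point the paper's proof simply takes for granted.
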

\begin{proof}
If $z\in A(f)$, then $\alpha(f,z)$ and $\omega(f,z)$ reduce to single points $x$ and $y$ respectively. Then, for each $n\in\mathbb{N}^+$, there is $m\in \mathbb{N}^+$ such that $d(f^{-i}(z),x)\leq\frac{1}{2n}$ and $d(f^i(z),y)\leq\frac{1}{2n}$ for all $i\geq m$. If $\lbrace x_i\rbrace_{i\in\mathbb{N}}$ is dense in $X$, there are $k,k'\in\mathbb{N}^+$ such that $d(x_k,x)\leq\frac{1}{2n}$ and $d(x_{k'},y)\leq\frac{1}{2n}$. Therefore, max$\lbrace d(f^{-i} (z),x_k),d(f^i(z),x_{k'})\leq\frac{1}{n}$ for all $i\geq m$. This completes the proof. 
\end{proof}

\begin{Lemma}
Let $\mu$ be a Borel measure on $X$. Then, for every measurable Lindel{\"o}f subset $K$ with $\mu(K)>0$ there are $z\in K$ and $\delta_0>0$ such that $\mu(K\cap B[z,\delta])>0$ for all $0<\delta<\delta_0$.  
\label{3.5}
\end{Lemma}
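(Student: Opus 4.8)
The plan is to prove the statement by contradiction, converting a pointwise vanishing condition into a countable cover via the Lindelöf property. So suppose the conclusion fails. Unwinding the quantifiers, this says: for every $z\in K$ and every $\delta_0>0$ there exists $\delta\in(0,\delta_0)$ with $\mu(K\cap B[z,\delta])=0$. Since the function $\delta\mapsto\mu(K\cap B[z,\delta])$ is non-decreasing, a single vanishing value forces vanishing for all smaller radii; applying the displayed condition with $\delta_0=1$ therefore lets me attach to each $z\in K$ a radius $r(z)>0$ with $\mu(K\cap B[z,r(z)])=0$. Each such set is $\mu$-measurable, since closed balls are Borel and $K$ is measurable.

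Next I would use these radii to build an open cover of $K$. The collection $\{K\cap B(z,r(z)):z\in K\}$ consists of sets open in the subspace $K$ and covers $K$ (as $z\in B(z,r(z))$ because $r(z)>0$). Because $K$ is Lindelöf, this cover admits a countable subcover: there are $z_1,z_2,\dots\in K$ with $K\subset\bigcup_{i\in\mathbb{N}}B(z_i,r(z_i))$.

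Finally, countable subadditivity of $\mu$ gives
\[
\mu(K)\le\sum_{i\in\mathbb{N}}\mu\bigl(K\cap B(z_i,r(z_i))\bigr)\le\sum_{i\in\mathbb{N}}\mu\bigl(K\cap B[z_i,r(z_i)]\bigr)=0,
\]
which contradicts $\mu(K)>0$ and proves the lemma. I do not expect a genuine obstacle here: the only steps requiring care are negating the statement correctly — in particular exploiting monotonicity in the radius to collapse the ``for every $\delta_0$'' into a single radius $r(z)$ per point — and confirming that every set appearing in the estimate is measurable so that subadditivity legitimately applies. The hypothesis that $K$ is Lindelöf is used in exactly one place, namely to pass to a countable subcover, which is what makes the final summation go through; without it the union over $z\in K$ could be uncountable and the estimate would break down.
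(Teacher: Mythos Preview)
Your proof is correct and follows essentially the same route as the paper: assume the conclusion fails, attach to each $z\in K$ a radius with $\mu(K\cap B[z,\cdot])=0$, extract a countable subcover via the Lindel\"of hypothesis, and conclude $\mu(K)=0$ by countable subadditivity. If anything, your version is more careful than the paper's, since you negate the quantifiers explicitly, invoke monotonicity in the radius to reduce to a single $r(z)$, and use open balls for the cover before passing to closed balls in the final estimate.
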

\begin{proof}
Otherwise, for every $z\in K$ there is $0<\delta_z<\delta_0$ such that $\mu(K\cap B[z,\delta_z])=0$. Since $K$ is Lindel{\"o}f, the open cover $\lbrace K\cap B[z,\delta_z]\mid z\in K\rbrace$ of $K$ admits a countable sub-cover, i.e., there is a sequence $\lbrace z_l\rbrace_{l\in\mathbb{N}}$ in $K$ satisfying $K=\bigcup_{l\in\mathbb{N}}(K\cap B[z_l,\delta_{z_l}])$. So, $\mu(K)=\sum_{l\in\mathbb{N}}\mu(K\cap B[z_l,\delta_{z_l}])=0$, a contradiction. 
\end{proof}

\begin{theorem}
The set $A(f)$ of a bi-measurable map $f$ of a separable metric space $X$, has measure zero with respect to any pointwise expansive measure.
\label{3.6}
\end{theorem}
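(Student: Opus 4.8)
The plan is a proof by contradiction, reducing everything to the one‑point statement supplied by pointwise expansivity. Suppose $\mu$ is pointwise expansive for $f$ and $\mu(A(f))>0$. By Lemma~\ref{3.4} (applied to a dense sequence $\{x_i\}$) the set $A(f)$ lies inside
$B:=\bigcap_{n\in\mathbb N^+}\bigcup_{k,k',m\in\mathbb N^+}A(x_k,x_{k'},n,m)$, which is Borel because each $A(x,y,n,m)=\bigcap_{i\ge m}\big((f^{-i})^{-1}B[x,1/n]\cap (f^{i})^{-1}B[y,1/n]\big)$ is a countable intersection of preimages of closed balls under the measurable maps $f^{\pm i}$. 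Hence $\mu(B)>0$, and it suffices to contradict this.

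I would first normalize the expansivity constants: for $z\in X$ set $\delta_z:=\sup\{\delta>0:\mu(\Gamma_\delta(z))=0\}$, so that $\delta_z>0$ for every $z$ and $\mu(\Gamma_\delta(z))=0$ whenever $\delta<\delta_z$. Since $X=\bigcup_{N\in\mathbb N^+}\{z:\delta_z>1/N\}$, countable subadditivity of the outer measure $\mu^*$ gives an $N$ with $\mu^*(S)>0$, where $S:=B\cap\{z:\delta_z>1/N\}$ (I work throughout with $\mu^*$ so as never to need $S$ or the level sets to be measurable). Now fix an integer $n>2N$, so $2/n<1/N$. Because $S\subset B\subset\bigcup_{k,k',m}A(x_k,x_{k'},n,m)$, subadditivity again yields $k,k',m$ with $\mu^*(S\cap Q)>0$ for $Q:=A(x_k,x_{k'},n,m)$; as $Q$ is Borel, $\mu(Q)\ge\mu^*(S\cap Q)>0$.

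The combinatorial core is to localize the central segment of the orbit. Cover $X$ by countably many open balls of radius $1/n$ (separability) and decompose $Q$ into the countably many Borel pieces $Q_t:=Q\cap\bigcap_{-m<i<m}(f^{i})^{-1}(B_{j_i})$, indexed by the finite ``central itineraries'' $t=(j_i)_{-m<i<m}$ recording which ball is visited at each time $|i|<m$. One piece $Q_t$ has $\mu^*(S\cap Q_t)>0$; in particular $S\cap Q_t\neq\emptyset$, so pick $z_0\in S\cap Q_t$, whence $\delta_{z_0}>1/N$. For an arbitrary $z'\in Q_t$ and $i\in\mathbb Z$: if $i\ge m$ then $f^{i}(z')$ and $f^{i}(z_0)$ both lie within $1/n$ of $x_{k'}$; if $i\le -m$ they both lie within $1/n$ of $x_k$; and if $-m<i<m$ they lie in the common ball $B_{j_i}$ of radius $1/n$. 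In every case $d(f^{i}(z'),f^{i}(z_0))\le 2/n$, so $Q_t\subset\Gamma_{2/n}(z_0)$. Therefore $0<\mu^*(S\cap Q_t)\le\mu(Q_t)\le\mu(\Gamma_{2/n}(z_0))=0$, since $2/n<1/N<\delta_{z_0}$ — a contradiction. Hence $\mu(B)=0$, and a fortiori $\mu(A(f))=0$.

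The step I expect to need the most care is the ordering of the quantifiers together with the bookkeeping in the itinerary decomposition: the expansivity radius $\delta_{z_0}$ depends on the point $z_0$, which only materializes after the refinement, so one must first pass to a level set $\{\delta_z>1/N\}$ of positive outer measure and only then choose $n$ (hence the box radius $2/n$) small relative to $1/N$. The second delicate point is making sure the chosen point $z_0$ actually lies in the box $Q_t$ — not merely near it — since this is exactly what forces the \emph{whole} bi‑infinite orbit (not just the tails) of every $z'\in Q_t$ to stay $2/n$-close to that of $z_0$, which is what lets the argument run with no continuity hypothesis on $f$. The remaining measure‑theoretic points (invoking $\mu^*$ to sidestep measurability of $\{\delta_z>1/N\}$, Borelness of $A(x,y,n,m)$ and $Q_t$) are routine.
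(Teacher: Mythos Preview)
Your argument is correct and takes a genuinely different route from the paper's. To control the finite central segment $|i|<m$, the paper invokes Lusin's theorem to produce a large set $C$ on which every $f^i|_C$ with $|i|\le m$ is continuous, and then uses the density-type Lemma~\ref{3.5} to locate $z\in B\cap C$ around which $B\cap C$ has positive measure at every scale; continuity of the restricted iterates at $z$ then forces nearby orbits in $C$ to stay $\delta_z$-close on the central block. You replace this analytic machinery by a purely combinatorial itinerary decomposition: refine $Q$ according to which ball of a fixed countable $1/n$-cover each iterate $f^i(z)$, $|i|<m$, visits, and pick the reference point $z_0$ inside one cell and simultaneously inside the level set $\{\delta_z>1/N\}$. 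This buys you a more elementary proof (no Lusin, no Lemma~\ref{3.5}) and in fact removes the outer-regularity hypothesis that the paper's proof quietly adds in its first line. Your systematic use of outer measure also makes the handling of the possibly non-measurable level sets $\{\delta_z>1/N\}$ explicit, and your ordering of quantifiers (fix $N$ first, then choose $n>2N$, then find $z_0$ inside $S\cap Q_t$) cleanly guarantees $2/n<\delta_{z_0}$; by contrast, the paper's point $z$ is selected only in $B\cap C$, and the inequality $\tfrac{1}{M}\le\delta_z$ it uses really requires $z\in A(M)$, which is not explicitly arranged there.
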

\begin{proof}
Let $\mu$ be a pointwise expansive outer regular measure for $f$. By contradiction, suppose there is $A\subset A(f)$ such that $\mu(A)>0$. For each $n\in\mathbb{N}^+$, let $A(n)$ be the set of points $a\in A$ such that $\frac{1}{n}\leq\delta_a$. Since $A=\bigcup_{n\in\mathbb{N}^+} A(n)$, we have $\mu(A(M))>0$ for some $M\in\mathbb{N}^+$. By Lemma \ref{3.4}, there is a sequence $x_k\in X$ such that $A(M)\subset \bigcap_{n\in\mathbb{N}^+}\bigcup_{k,k',m\in\mathbb{N}^{+}} A(x_k,x_{k'},n,m)$. Therefore, we have $A(M)\subset \bigcup_{k,k',m\in\mathbb{N}^{+}} A(x_k,x_{k'},n,m)$ for all $n\in\mathbb{N}^+$. Thus, we can choose $k,k',m\in\mathbb{N}^+$ such that $\mu(A(x_k,x_{k'},n,m)>0$ for all $n\in\mathbb{N}^+$. In particular, we have $\mu(A(x_k,x_{k'},M,m)>0$. Hereafter, we fix such $k,k',m\in\mathbb{N}^+$ and for simplicity we put $B=A(x_k,x_{k'},M,m)$.  
\medskip

$\textit{Lusin Theorem}$ \cite{F} implies that for every $\epsilon>0$ there is a measurable subset $C_\epsilon$ with $\mu(X\setminus C_\epsilon)<\epsilon$ such that $f^i\mid_{C_\epsilon}$ is continuous for all integer $i$ with $\mid i\mid \leq m$. Taking $\epsilon=\frac{\mu(B)}{2}$, we obtain a measurable subset $C=C_{\frac{\mu(B)}{2}}$ such that $f^i\mid_C$ continuous for all integer $i$ with $\mid i\mid\leq m$ and $\mu(B\cap C)>0$. Since $B\cap C$ is Lindel{\"o}f, by Lemma \ref{3.5} there are $z\in B\cap C$ and $\delta_0>0$ such that $\mu(B\cap C\cap B[z,\delta])>0$ for all $0<\delta<\delta_0$. Since $z\in C$ and $f^i\mid_C$ is continuous for all $\mid i\mid\leq m$, we can fix $0<\delta<\delta_0$ such that $d(f^i(z),f^i(w))\leq\delta_z$ for all $\mid i\mid\leq m$, whenever $d(z,w)\leq\delta$ with $w\in C$. 
\medskip

Let $w\in B\cap C\cap B[z,\delta]$ which implies $w\in C\cap B[z,\delta]$ and hence, $d(f^i(z),f^i(w))\leq \delta_z$ for all $\mid i\mid\leq m$. Further, since $z,w\in B\cap C$, we have $d(f^i(z),f^i(w))\leq \frac{1}{M}\leq\delta_z$ for all $\mid i\mid\geq m$. Combining we get $d(f^i(z),f^i(w))\leq\delta_z$ for all $i\in\mathbb{Z}$ which implies $w\in\Gamma_{\delta_z}(z)$ and hence, $B\cap C\cap B[z,\delta]\subset\Gamma_{\delta_z}(z)$. Thus, $\mu(B\cap C\cap B[z,\delta])=0$, which is a contradiction.  
\end{proof}

\begin{Corollary} (i) A uniform equivalence $f$ of a separable metric space, is aperiodic with respect to any pointwise expansive measure $\mu$ for $f$.
\\ 
(ii) The set $A(f)$ of a pointwise measure expansive homeomorphism $f$ of a separable metric space, has measure zero with respect to any non-atomic Borel measure.
\\ 
(iii) Let $f$ be a uniform equivalence of a separable metric space. Then, the set of periodic points of $f$ has measure zero with respect to any pointwise expansive measure $\mu$ for $f$.
\\  
(iv) The set $Per(f)$ of a pointwise measure expansive uniform equivalence $f$ of a separable metric space is at most countable.     
\label{3.7}
\end{Corollary}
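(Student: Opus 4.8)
The plan is to get the first three parts by reducing to Theorem~\ref{3.6} and Corollary~\ref{3.2} through the elementary remark that, for a bi-measurable map $g$ on $X$, one has $Fix(g):=\{x\in X\mid g(x)=x\}\subset A(g)$, since $g(x)=x$ forces $\alpha(g,x)=\omega(g,x)=\{x\}$. Applied to $g=f^{n}$ (again a uniform equivalence, hence bi-measurable) this gives $Per(f)=\bigcup_{n\in\mathbb{N}^{+}}Fix(f^{n})\subset\bigcup_{n\in\mathbb{N}^{+}}A(f^{n})$. Now (ii) is immediate: for pointwise measure expansive $f$ the constants $\delta_{x}$ work for all non-atomic Borel measures at once, so every non-atomic Borel $\mu$ is pointwise expansive for $f$ and Theorem~\ref{3.6} gives $\mu(A(f))=0$. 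For (iii), if $\mu$ is pointwise expansive for the uniform equivalence $f$ then by Corollary~\ref{3.2} it is pointwise expansive for each $f^{n}$, so Theorem~\ref{3.6} applied to $f^{n}$ gives $\mu(A(f^{n}))=0$ and hence $\mu(Per(f))\le\sum_{n\in\mathbb{N}^{+}}\mu(A(f^{n}))=0$. Part (i) is then the case $B\subset Fix(f^{n})$ of (iii): such a $B$ lies in $Per(f)$, so $\mu(B)=0$, i.e.\ $f$ is aperiodic with respect to $\mu$.

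For (iv), each $Fix(f^{n})$ is closed ($f^{n}$ continuous, $X$ Hausdorff), so $Per(f)$ is an $F_{\sigma}$, hence Borel. Suppose for contradiction $Per(f)$ is uncountable. Since $f$ is pointwise measure expansive, every non-atomic Borel measure on $X$ is pointwise expansive for $f$, so by (iii) every non-atomic Borel measure assigns $Per(f)$ measure zero. On the other hand $Per(f)=\bigcup_{n}Fix(f^{n})$ is a countable union, so some $Fix(f^{n_{0}})$ is uncountable, and by the Cantor--Bendixson theorem this closed set splits as a countable set together with a nonempty perfect closed set $P\subset X$, necessarily uncountable. If $P$ carries a non-atomic Borel probability measure $\nu$, then $\mu(\,\cdot\,):=\nu(\,\cdot\,\cap P)$ is a non-atomic Borel measure on $X$ with $\mu(Per(f))\ge\nu(P)=1>0$, contradicting the previous sentence; hence $Per(f)$ is at most countable.

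The step I expect to be the genuine obstacle is the production of the non-atomic measure $\nu$ on the perfect closed set $P$. When $X$ is Polish (in particular compact metric) this is standard: $P$ contains a topological Cantor set by the perfect-set property and $\nu$ is the push-forward of the coin-tossing measure. For a merely separable metric $X$ it can fail, since a nonempty perfect separable metric space (for instance a Bernstein subset of $\mathbb{R}$) may carry no non-atomic Borel measure, so the clean argument really wants the blanket assumption that $X$ is Polish, or at least that uncountable closed subsets of $X$ admit non-atomic Borel measures; granting this, the rest --- the inclusion $Per(f)\subset\bigcup_{n}A(f^{n})$, the transfer of pointwise expansivity to powers through Corollary~\ref{3.2}, Theorem~\ref{3.6}, and the extension of $\nu$ to $X$ --- is bookkeeping. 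Alternatively one can prove (iv) without using (iii): fixing a condensation point $p$ of the uncountable set $Fix(f^{n_{0}})$ lying in $Fix(f^{n_{0}})$ and the constant $\delta_{p}$ furnished by pointwise measure expansivity of $f^{n_{0}}$, periodicity gives $Fix(f^{n_{0}})\cap B[p,\delta_{p}]\subset\Gamma_{\delta_{p}}(p)$ for the map $f^{n_{0}}$, so this $\Gamma$-set is an uncountable closed set that is null for every non-atomic Borel measure --- and the same measure-existence fact yields the contradiction.
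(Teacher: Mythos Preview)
Your argument for parts (i)--(iii) coincides with the paper's: both reduce to Theorem~\ref{3.6} via the inclusion $Fix(f^{n})\subset A(f^{n})$ together with Corollary~\ref{3.2}, and both read off (ii) directly from the definition of pointwise measure expansivity. For (iv) the paper follows the very strategy you outline: assume some $Fix(f^{n})$ is uncountable, manufacture a non-atomic Borel measure $\nu$ supported on it, extend to $X$ by $\mu(A)=\nu(A\cap Fix(f^{n}))$, and contradict (iii).

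The one substantive difference is that the paper does not pass through Cantor--Bendixson; it simply asserts that $Fix(f^{n})$ is an ``uncountable, separable, \emph{complete} metric space'' and invokes the standard existence of a non-atomic Borel measure on such a space. Your worry about this step is well placed: the statement of the corollary assumes only that $X$ is separable, so completeness of $Fix(f^{n})$ is not available in general, and the paper offers no justification for it. In other words, the gap you flag in your own argument (existence of a non-atomic measure on an uncountable closed subset of a merely separable metric space) is present in the paper's proof as well, just suppressed by an unproved completeness claim. Your alternative route via a condensation point $p\in Fix(f^{n_{0}})$ and the inclusion $Fix(f^{n_{0}})\cap B[p,\delta_{p}]\subset\Gamma_{\delta_{p}}(p)$ runs into the same obstruction, since one still needs a non-atomic measure giving that closed set positive mass; so under the stated hypotheses both your proof and the paper's require the extra input that uncountable closed subsets of $X$ support non-atomic Borel measures (automatic if $X$ is Polish).
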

\begin{proof}
(i) If $B$ is a set and $n$ be a positive integer such that $f^n(x)=x$ for all $x\in B$, then $B\subset A(f^m)$ for some $1\leq m\leq n$. By Corollary \ref{3.2}, $\mu$ is pointwise expansive with respect to $f^m$. Then by Theorem \ref{3.6}, we have $\mu(A)=0$. Since $A$ and $n$ are arbitrary, we conclude that $f$ is aperiodic.
\\
(ii) This clearly follows from the definitions of pointwise expansive measure and measure expansive homeomorphism. 
\\
(iii) It is clear from the fact that $Per(f)=\bigcup_{m\in\mathbb{N}^{+}} Fix(f^m)$, $Fix(f^m)\subset A(f^m)$ and $\mu$ is pointwise expansive for $f^m$ for all $m\geq 1$.     
\\
(iv) It is enough to show that $Fix(f^n)$ is at most countable for each $n\in\mathbb{N}^{+}$. If possible suppose that $Fix(f^n)$ is uncountable for some $n\in\mathbb{N}^{+}$. Then, $Fix(f^n)$ is uncountable, separable, complete metric space and hence there is non-atomic Borel measure $\nu$ on $Fix(f^n)$. If we set $\mu(A)=\nu(Fix(f^n)\cap A)$ for all Borel measurable set $A$ of $X$, then $\mu$ is a non-atomic Borel measure on $X$ such that $Fix(f^n)$ has full $\mu$-measure which implies $Per(f)$ has full $\mu$-measure. On contrary, $\mu(Per(f))=0$ by Corollary \ref{3.7}(iii) and hence, $Per(f)$ must be at most countable.      
\end{proof}

\begin{theorem}
The set $A_f(p,q)$ under a bi-measurable map $f$ of a separable metric space $X$, has measure zero with respect to any pointwise expansive outer regular measure.    
\label{3.11}
\end{theorem}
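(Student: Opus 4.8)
The plan is to run the argument of Theorem~\ref{3.6}, with the fixed limit points $x,y$ there replaced by the (moving) orbits of $p$ and $q$. For $n,m\in\mathbb{N}^+$ I would first introduce the auxiliary sets
\[
B(p,q,n,m)=\{\, z\in X\mid \max\{d(f^{-i}(z),f^{-i}(q)),\, d(f^{i}(z),f^{i}(p))\}\leq \tfrac1n \text{ for all } i\geq m \,\}.
\]
Because $f$ is bi-measurable every $f^{i}$ is measurable, so each $B(p,q,n,m)$ is a countable intersection of preimages of closed balls and hence measurable; and unwinding the definitions of ``positively asymptotic to $p$'' and ``negatively asymptotic to $q$'' (choose $\epsilon=1/n$ and let $m$ be the larger of the two thresholds) gives the identity $A_{f}(p,q)=\bigcap_{n\in\mathbb{N}^+}\bigcup_{m\in\mathbb{N}^+}B(p,q,n,m)$. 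In particular $A_{f}(p,q)$ is Borel; and, in contrast with Theorem~\ref{3.6}, no analogue of Lemma~\ref{3.4} is needed, since $p$ and $q$ are prescribed rather than produced from $z$.

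Assume, towards a contradiction, that $\mu$ is a pointwise expansive outer regular measure for $f$ with $\mu(A_{f}(p,q))>0$, and fix, for each $a$, a pointwise expansivity constant $\delta_{a}>0$ with $\mu(\Gamma_{\delta_{a}}(a))=0$. Set $A(n)=\{\,a\in A_{f}(p,q)\mid 2/n\leq\delta_{a}\,\}$; the threshold $2/n$ (rather than $1/n$) is chosen so that the triangle inequality through the orbit of $p$, resp.\ $q$, will close. Since $A_{f}(p,q)=\bigcup_{n}A(n)$, countable subadditivity of the outer measure gives $M$ with $\mu^{\ast}(A(M))>0$, and then $A(M)\subset\bigcup_{m}B(p,q,M,m)$ gives $m$ with $\mu^{\ast}(A(M)\cap B(p,q,M,m))>0$. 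Applying Lusin's Theorem~\cite{F} with a sufficiently small parameter produces a measurable set $C$ on which $f^{i}$ restricts to a continuous map for all $|i|\leq m$ and for which $S:=A(M)\cap B(p,q,M,m)\cap C$ still has $\mu^{\ast}(S)>0$. As $S$ lies in a separable metric space it is Lindel{\"o}f, so the covering argument in the proof of Lemma~\ref{3.5} --- which uses only countable subadditivity and therefore applies verbatim to outer measure --- yields a point $z\in S$ and $\delta_{0}>0$ such that $\mu\bigl(B(p,q,M,m)\cap C\cap B[z,\delta]\bigr)>0$ for all $0<\delta<\delta_{0}$.

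It then remains to close the loop exactly as in Theorem~\ref{3.6}. Using continuity of $f^{i}|_{C}$ for $|i|\leq m$, fix $0<\delta<\delta_{0}$ so that $w\in C$ and $d(z,w)\leq\delta$ force $d(f^{i}(z),f^{i}(w))\leq\delta_{z}$ for all $|i|\leq m$. For $w\in B(p,q,M,m)\cap C\cap B[z,\delta]$ this handles $|i|\leq m$; for $i\geq m$, since $z,w\in B(p,q,M,m)$,
\[
d(f^{i}(z),f^{i}(w))\leq d(f^{i}(z),f^{i}(p))+d(f^{i}(p),f^{i}(w))\leq \tfrac{2}{M}\leq\delta_{z},
\]
and symmetrically $d(f^{i}(z),f^{i}(w))\leq 2/M\leq\delta_{z}$ for $i\leq -m$ via the orbit of $q$. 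Hence $w\in\Gamma_{\delta_{z}}(z)$, so $B(p,q,M,m)\cap C\cap B[z,\delta]\subset\Gamma_{\delta_{z}}(z)$ and $0<\mu\bigl(B(p,q,M,m)\cap C\cap B[z,\delta]\bigr)\leq\mu(\Gamma_{\delta_{z}}(z))=0$, a contradiction; thus $\mu(A_{f}(p,q))=0$. I expect the only genuinely delicate points to be the factor-$2$ bookkeeping in the triangle inequality (handled by the $2/n$ threshold) and the fact that the sets $A(n)$ need not be measurable (handled by passing to outer measure and by the subadditivity-only nature of Lemma~\ref{3.5}); the rest is a routine transcription of the proof of Theorem~\ref{3.6}.
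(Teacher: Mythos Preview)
Your proposal is correct and follows essentially the same route as the paper's proof: stratify $A_f(p,q)$ by the expansivity constant, pass to the measurable ``asymptotic at scale $1/M$ by time $m$'' sets, apply Lusin's theorem and Lemma~\ref{3.5} to locate a good $z$, and close via the triangle inequality through the orbits of $p$ and $q$. The only differences are cosmetic bookkeeping---you place the factor $2$ in the threshold $2/n\le\delta_a$ for $A(n)$ while the paper places it in the bound $1/(2M)$ defining $A_N$---and your explicit use of outer measure to handle the non-measurability of $A(n)$, which in fact tightens the paper's argument (the paper's write-up tacitly needs $z\in A(M)$ to conclude $1/M\le\delta_z$, and your version keeps track of this).
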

\begin{proof}
Let $\mu$ be a pointwise expansive outer regular measure for $f$. Let $p,q\in X$ be given and $A\subset A(p,q)$ be such that $\mu(A)>0$. For $n\in\mathbb{N}^+$, let $A(n)$ be the set of points $a\in A$ such that $\frac{1}{n}\leq\delta_a$. Since $A=\bigcup_{n\in\mathbb{N}^+} A(n)$, we have $\mu(A(M))>0$ for some $M\in\mathbb{N}^+$. If $A_N=\lbrace x\in X\mid d(f^n(x),f^n(p))\leq\frac{1}{2M}$ for all $n\geq N$ and $d(f^n(x),f^n(q))\leq\frac{1}{2M}$ for all $n\leq -N\rbrace$, then $A(M)\subset\bigcup_{N\geq 0}{A_N}$ and each $A_N$ is measurable. We show that $\mu (\bigcup_{N\geq 1}A_N)=0$. If possible, suppose $\mu(\bigcup_{N\geq 1} A_N)>0$. So, there is $K\geq 1$ such that $\mu(A_K)>0$.  
\medskip

$\textit{Lusin Theorem}$ \cite{F} implies that for every $\epsilon>0$ there is a measurable subset $C_\epsilon$ with $\mu(X\setminus C_\epsilon)<\epsilon$ such that $f^i\mid_{C_\epsilon}$ is continuous for all integer $i$ with $\mid i\mid \leq K$. Taking $\epsilon=\frac{\mu(A_K)}{2}$, we obtain a measurable subset $C=C_{\frac{\mu(A_K)}{2}}$ such that $f^i\mid_C$ continuous for all integer $i$ with $\mid i\mid\leq K$ and $\mu(A_K\cap C)>0$. Since $A_K\cap C$ is Lindel{\"o}f, by Lemma \ref{3.5} there are $z\in B\cap C$ and $\delta_0>0$ such that $\mu(A_K\cap C\cap B[z,\delta])>0$ for all $0<\delta<\delta_0$. Since $z\in C$ and $f^i\mid_C$ is continuous for all $\mid i\mid\leq K$, we can fix $0<\delta<\delta_0$ such that $d(f^i(z),f^i(w))\leq\delta_z$ for all $\mid i\mid\leq K$, whenever $d(z,w)\leq\delta$ with $w\in C$.
\medskip

Let $w\in A_K\cap C\cap B[z,\delta]$ which implies $w\in C\cap B[z,\delta]$ and hence, $d(f^i(z),f^i(w))\leq \delta_z$ for all $\mid i\mid\leq K$. Since $z\in A_K\cap C$, we have $w,z\in A_K$. Hence, $d(f^i(w),f^i(p))\leq\frac{1}{2M}$, $d(f^i(z),f^i(q))\leq\frac{1}{2M}$ for all $i\geq K$ and $d(f^i(w),f^i(p))\leq\frac{1}{2M}$, $d(f^i(z),f^i(q))\leq\frac{1}{2M}$ for all $i\leq -K$. Thus, $d(f^i(z),f^i(w))\leq\frac{1}{M}\leq\delta_z$ for all $\mid i \mid \geq K$. Thus $d(f^i(z),f^i(w))\leq\delta_z$ for all $i\in\mathbb{Z}$ which implies $A_K\cap C\cap B[z,\delta])\subset \Gamma_{\delta_z}(z)$ and hence, $\mu(A_K\cap C\cap B[z,\delta]))=0$, which is a contradiction.
\end{proof}

\begin{Corollary}
(i) The set $A_f(p,q)$ of a pointwise measure expansive homeomorphism of a separable metric space, has measure zero with respect to any non-atomic Borel measure.
(ii) The set of all heteroclinic points of a pointwise measure expansive uniform equivalence of a separable metric space has measure zero with respect to non-atomic Borel measure. 
\label{3.13}
\end{Corollary}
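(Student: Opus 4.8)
The plan is to deduce both statements from Theorem~\ref{3.11}, using for part~(ii) the countability of $Per(f)$ established in Corollary~\ref{3.7}(iv).

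For part~(i), let $f$ be a pointwise measure expansive homeomorphism of a separable metric space $X$ and let $\mu$ be an arbitrary non-atomic Borel measure on $X$. By the definition of pointwise measure expansivity there are constants $\delta_x>0$, chosen independently of the measure, with $\mu(\Gamma_{\delta_x}(x))=0$ for every $x$; thus $\mu$ is in particular a pointwise expansive measure for $f$. Since a finite (more generally, $\sigma$-finite) Borel measure on a metric space is automatically outer regular, Theorem~\ref{3.11} applies and yields $\mu(A_f(p,q))=0$ for all $p,q\in X$. If one prefers not to restrict to $\sigma$-finite measures, the same conclusion follows by running the argument of Theorem~\ref{3.11} with $\mu$ replaced by its restriction to a set of finite measure containing the relevant Lindel{\"o}f piece, since regularity enters that proof only through Lusin's theorem.

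For part~(ii), let $f$ be a pointwise measure expansive uniform equivalence of a separable metric space; it is in particular a pointwise measure expansive homeomorphism, and by Corollary~\ref{3.7}(iv) the set $Per(f)$ is at most countable. Next I would unwind the definition of heteroclinic point: if $p,q\in Per(f)$ have periods $n_p,n_q$, then $W^s(O(p))=\bigcup_{0\le i<n_p}W^s(f^i(p))$ and $W^u(O(q))=\bigcup_{0\le j<n_q}W^u(f^j(q))$, while directly from the definitions $W^s(f^i(p))$ consists of the points positively asymptotic to $f^i(p)$ and $W^u(f^j(q))$ of the points negatively asymptotic to $f^j(q)$. Hence $W^s(f^i(p))\cap W^u(f^j(q))=A_f(f^i(p),f^j(q))$, so the set of all heteroclinic points of $f$ equals
\[
\bigcup_{p,q\in Per(f)}\ \bigcup_{0\le i<n_p}\ \bigcup_{0\le j<n_q} A_f(f^i(p),f^j(q)).
\]
Because $Per(f)$ is countable and each periodic orbit is finite, this is a countable union, and each member is $\mu$-null by part~(i); therefore the heteroclinic set is $\mu$-null.

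The one point deserving care is the regularity hypothesis in part~(i): Theorem~\ref{3.11} is stated for pointwise expansive \emph{outer regular} measures, so one must check that it is legitimate to feed it an arbitrary non-atomic Borel measure. This is the only obstacle, and it is resolved either by the standard fact that $\sigma$-finite Borel measures on metric spaces are outer regular or by the localization remark above; once that is settled, both parts are routine countable-union arguments built on Theorem~\ref{3.11} and Corollary~\ref{3.7}(iv).
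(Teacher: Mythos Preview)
Your proposal is correct and matches the paper's intended approach: the paper gives no proof for this corollary, but the introduction explicitly signals that part~(ii) is to be deduced ``using the fact that the set of periodic points of such system is at most countable,'' i.e., via Corollary~\ref{3.7}(iv) together with the $A_f(p,q)$ result, which is precisely your countable-union argument built on Theorem~\ref{3.11}. Your explicit treatment of the outer-regularity hypothesis is a genuine addition---the paper is silent on this point (indeed it is already inconsistent between the statement and proof of Theorem~\ref{3.6})---so your care there fills a real gap rather than diverging from the paper's line.
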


\begin{theorem}
Every stable class of a measurable map on a separable metric space has measure zero with respect to any positively pointwise expansive outer regular measure.
\label{3.15}
\end{theorem}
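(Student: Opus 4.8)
The plan is to run the same scheme as the proofs of Theorems \ref{3.6} and \ref{3.11}, but using only forward iterates: both the notion of positive pointwise expansivity and the definition of $W^{s}$ refer to $f^{i}$ with $i\geq 0$ only, so $f$ need only be measurable. A stable class is an equivalence class of the relation ``$y$ is positively asymptotic to $x$'', that is, a set $W^{s}(p)$, so it suffices to prove $\mu(W^{s}(p))=0$ for every $p\in X$, where $\mu$ is a positively pointwise expansive outer regular measure. Arguing by contradiction, I would suppose there is a measurable $A\subset W^{s}(p)$ with $\mu(A)>0$. First, stratify by the expansivity constant: for $n\in\mathbb{N}^{+}$ set $A(n)=\lbrace a\in A\mid\mu(\Phi_{1/n}(a))=0\rbrace$; since $A=\bigcup_{n\in\mathbb{N}^{+}}A(n)$ by positive pointwise expansivity, there is $M$ with $\mu(A(M))>0$, and then $\delta_{z}:=1/M$ works as a positive pointwise expansivity constant at every $z\in A(M)$.

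Next I would stratify $A(M)$ according to how long a point takes to become $\frac{1}{2M}$-close to the forward orbit of $p$: for $N\in\mathbb{N}$ let $A_{N}=\lbrace x\in A(M)\mid d(f^{i}(x),f^{i}(p))\leq\frac{1}{2M}\text{ for all }i\geq N\rbrace$. Each $A_{N}$ is measurable, being a countable intersection of preimages of a closed set under the measurable maps $f^{i}$, the sets $A_{N}$ increase with $N$, and $A(M)\subset\bigcup_{N\geq 0}A_{N}$ since every point of $W^{s}(p)$ is positively asymptotic to $p$; hence $\mu(A_{K})>0$ for some $K$. Now apply Lusin's theorem (this is where outer regularity is used) with $\epsilon=\mu(A_{K})/2$ to obtain a measurable $C$ with $\mu(X\setminus C)<\epsilon$ on which $f^{i}|_{C}$ is continuous for all $0\leq i\leq K$; then $\mu(A_{K}\cap C)>0$. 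Since $A_{K}\cap C$ is Lindel{\"o}f, Lemma \ref{3.5} gives $z\in A_{K}\cap C$ and $\delta_{0}>0$ with $\mu(A_{K}\cap C\cap B[z,\delta])>0$ for all $0<\delta<\delta_{0}$.

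Using continuity of the finitely many maps $f^{0}|_{C},\dots,f^{K}|_{C}$ at $z$, I would fix $0<\delta<\delta_{0}$ small enough that $d(f^{i}(z),f^{i}(w))\leq\delta_{z}$ for all $0\leq i\leq K$ whenever $w\in C$ and $d(z,w)\leq\delta$. Then for any $w\in A_{K}\cap C\cap B[z,\delta]$, the previous sentence gives $d(f^{i}(z),f^{i}(w))\leq\delta_{z}$ on the block $0\leq i\leq K$, while for $i\geq K$, since $w,z\in A_{K}$, the triangle inequality gives $d(f^{i}(z),f^{i}(w))\leq\frac{1}{2M}+\frac{1}{2M}=\frac{1}{M}=\delta_{z}$. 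Hence $w\in\Phi_{\delta_{z}}(z)$, so $A_{K}\cap C\cap B[z,\delta]\subset\Phi_{\delta_{z}}(z)$ and therefore $\mu(A_{K}\cap C\cap B[z,\delta])\leq\mu(\Phi_{\delta_{z}}(z))=0$, contradicting the choice of $z$ and $\delta$.

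As in Theorems \ref{3.6} and \ref{3.11}, the points requiring care are the measurability of the strata $A(n)$ (handled by fixing $\delta_{a}$ canonically through the condition $\mu(\Phi_{1/n}(a))=0$ and invoking outer-measure subadditivity where needed) and the legitimacy of Lusin's theorem, for which outer regularity is exactly the hypothesis in force. I expect the real crux to be the same gluing step as in Theorem \ref{3.6}: matching the Lusin-continuity control on the finite block $0\leq i\leq K$ with the asymptotic control on the tail $i\geq K$ so as to pull $w$ into $\Phi_{\delta_{z}}(z)$.
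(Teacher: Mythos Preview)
Your proposal is correct and is exactly the adaptation the paper has in mind: the paper's own proof of Theorem~\ref{3.15} consists solely of the sentence ``One can prove this result by following the same steps as in Theorem~\ref{3.11} above,'' and you have faithfully carried out that adaptation, replacing the two-sided asymptotics of $A_{f}(p,q)$ by the one-sided asymptotics of $W^{s}(p)$ and the sets $\Gamma_{\delta}$ by $\Phi_{\delta}$. One small point worth aligning with the paper: in Theorem~\ref{3.11} the sets $A_{N}$ are cut out of $X$ (not out of $A(M)$), which makes their measurability immediate as countable intersections of measurable sets; defining $A_{N}\subset A(M)$ as you do reintroduces the measurability issue you already flag for the strata $A(n)$, so it is cleaner to set $A_{N}=\lbrace x\in X\mid d(f^{i}(x),f^{i}(p))\leq\frac{1}{2M}\text{ for all }i\geq N\rbrace$ and use $A(M)\subset\bigcup_{N}A_{N}$.
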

\begin{proof}
One can prove this result by following the same steps as in Theorem \ref{3.11} above.
\end{proof}

\begin{Corollary}
A continuous map of a separable metric space admitting positively pointwise expansive outer regular measure has uncountably many stable classes. 
\label{3.16}
\end{Corollary}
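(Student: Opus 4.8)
The plan is to read this off directly from Theorem~\ref{3.15}. First I would record that the stable classes partition $X$: declaring $x\sim y$ when $y\in W^s(x)$ gives an equivalence relation (reflexivity and symmetry are immediate from the definition, and transitivity follows by splitting a prescribed $\epsilon$ as $\tfrac{\epsilon}{2}+\tfrac{\epsilon}{2}$ and using the triangle inequality), and its equivalence classes are exactly the stable classes of $f$. Hence $X$ is the union of its stable classes.

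Next I would argue by contradiction. Let $\mu$ be a positively pointwise expansive outer regular measure for $f$; as a nontrivial Borel measure it satisfies $\mu(X)>0$ (for the zero measure the statement would be vacuous). Suppose $f$ had only countably many stable classes $\{S_k\}_{k\in\mathbb{N}}$. Since $f$ is continuous it is in particular Borel measurable, so Theorem~\ref{3.15} applies and yields $\mu(S_k)=0$ for every $k$. Because the $S_k$ cover $X$, countable subadditivity of the (outer) measure gives
\[
\mu(X)=\mu\Bigl(\bigcup_{k\in\mathbb{N}} S_k\Bigr)\le\sum_{k\in\mathbb{N}}\mu(S_k)=0,
\]
contradicting $\mu(X)>0$. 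Therefore the family of stable classes must be uncountable.

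I do not expect a genuine obstacle: the substance is entirely in Theorem~\ref{3.15}, and the present argument is a one-line partition-plus-countable-additivity step. The only point that merits care is measurability --- a stable set $W^s(x)$ need not be Borel, so ``$\mu(S_k)=0$'' must be understood for the outer measure induced by $\mu$, which is precisely the reason outer regularity appears in the hypotheses of Theorem~\ref{3.15} and is inherited here. Granting that reading, the displayed estimate is valid by countable subadditivity of the outer measure, and the corollary follows.
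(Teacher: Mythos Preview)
Your argument is correct and is precisely what the paper intends: Corollary~\ref{3.16} is stated without proof as an immediate consequence of Theorem~\ref{3.15}, and the partition-plus-countable-subadditivity step you supply is the natural fill-in. As a minor aside, since $f$ is continuous each stable class $W^{s}(x)=\bigcap_{k\ge 1}\bigcup_{n\ge 0}\bigcap_{i\ge n}\{y:d(f^{i}(x),f^{i}(y))\le 1/k\}$ is already Borel, so your outer-measure caveat, while harmless, is not strictly needed here.
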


\begin{theorem}
If a bi-measurable map $f$ with canonical coordinates admits strictly positive positively pointwise expansive measure $\mu$, then no point is a sink. 
\label{3.17}
\end{theorem}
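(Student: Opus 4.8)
The plan is to argue by contradiction: suppose $x$ is a sink for $f$, so that $W^u(x,\delta)=\{x\}$ for some $\delta>0$. The idea is to use the canonical coordinates property to "thicken" this degenerate unstable set into a genuinely fat (positive measure) subset of $\Gamma_{\delta_x}(x)$, contradicting positive pointwise expansivity at $x$. First I would fix the pointwise expansivity constant $\delta_x>0$ given by the measure $\mu$ at the point $x$, and set $\epsilon=\min\{\delta,\delta_x\}/2$ (or some similar comparison constant). Apply the canonical coordinates property to this $\epsilon$ to obtain $\delta'>0$ such that $d(y,z)<\delta'$ implies $W^s(y,\epsilon)\cap W^u(z,\epsilon)\neq\emptyset$.

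Next I would look at the open ball $B(x,\delta')$. Since $\mu$ is strictly positive, $\mu(B(x,\delta'))>0$. For each $y\in B(x,\delta')$, canonical coordinates gives a point $w_y\in W^s(x,\epsilon)\cap W^u(y,\epsilon)$. The key step is to show that this correspondence forces a positive-measure set to sit inside $\Gamma_{\delta_x}(x)$ — or more precisely, inside some $\Gamma^{\mathbb{H}}$-type set or the forward orbit analogue. Since $x$ is a sink, $W^u(x,\delta)=\{x\}$; one should leverage that any point in $W^u(x,\epsilon)$ with $\epsilon\le\delta$ equals $x$, so that the "unstable coordinate" collapses and the map $y\mapsto w_y$ (landing in $W^s(x,\epsilon)$) becomes essentially injective on $B(x,\delta')$, pushing a positive-measure set into $W^s(x,\epsilon)$. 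Combined with the fact that points of $W^s(x,\epsilon)$ are $\epsilon\le\delta_x$-shadowed by $x$ in forward time, one concludes $\mu(\Phi_{\delta_x}(x))>0$, contradicting that $\mu$ is positively pointwise expansive at $x$.

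The main obstacle I expect is making the "thickening" argument measure-theoretically rigorous: the set-valued correspondence $y\mapsto W^s(x,\epsilon)\cap W^u(y,\epsilon)$ need not be single-valued or measurable a priori, so one cannot naively transport measure along it. The honest route is probably the reverse one: show directly that $B(x,\delta')\cap(\text{something})$ is itself contained in $\Gamma_{\delta_x}(x)$ or a forward-invariant analogue, by using the sink condition to rule out any forward separation. Concretely, if $y\in B(x,\delta')$ then the unstable part being trivial means $y$ cannot escape $x$ in backward time past scale $\epsilon$, and one must then control forward iterates — this is where the precise interplay of $W^u(x,\delta)=\{x\}$, canonical coordinates, and the choice of $\epsilon$ relative to $\delta_x$ has to be exploited carefully, likely by an argument showing $B(x,\delta')\subseteq\Phi_{\delta_x}(x)$ after shrinking $\delta'$, so that strict positivity of $\mu$ immediately yields $\mu(\Phi_{\delta_x}(x))>0$, the desired contradiction.
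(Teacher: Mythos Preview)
Your overall plan and final target are exactly right and match the paper: one wants to show that an entire open ball $B(x,\delta')$ lies inside $W^s(x,\delta_x)=\Phi_{\delta_x}(x)$, so that strict positivity of $\mu$ gives $\mu(\Phi_{\delta_x}(x))>0$, contradicting positive pointwise expansivity at $x$. Where your outline goes astray is in \emph{how} you apply canonical coordinates. You take the pair $(x,y)$ and produce $w_y\in W^s(x,\epsilon)\cap W^u(y,\epsilon)$; this is the direction that leads you into the set-valued correspondence $y\mapsto w_y$ and the (non-)issue of measurable selection. That whole detour is unnecessary, and the ``main obstacle'' you flag is a red herring.

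The paper's one-line trick is to swap the roles of $x$ and $y$. Since $d(y,x)=d(x,y)<\delta'$, canonical coordinates applied to the pair $(y,x)$ gives $W^s(y,\epsilon)\cap W^u(x,\epsilon)\neq\emptyset$. But $x$ is a sink and $\epsilon\le\delta_0$, so $W^u(x,\epsilon)\subset W^u(x,\delta_0)=\{x\}$; the intersection point is therefore $x$ itself. Hence $x\in W^s(y,\epsilon)$, which by symmetry of the definition means $y\in W^s(x,\epsilon)\subset W^s(x,\delta_x)$. This holds for every $y\in B(x,\delta')$, so $B(x,\delta')\subset \Phi_{\delta_x}(x)$ directly, with no selection argument needed. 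Note also that your informal reasoning in the last paragraph has the directions reversed: the sink condition plus canonical coordinates controls \emph{forward} iterates of nearby $y$ (it places $y$ in $W^s(x,\epsilon)$), not backward ones.
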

\begin{proof}
If $x\in X$ is a sink, then there is $\delta_0>0$ such that $W^u(x,\delta_0)=\lbrace x\rbrace$. Let $\delta_x>0$ be a pointwise expansive constant at $x$ and let $\epsilon=$ min$\lbrace \delta_0,\delta_x\rbrace$. Since $f$ has canonical coordinates, there is $\delta'>0$ such that $d(x,y)<\delta'$ implies $W^s(x,\epsilon)\cap W^u(y,\epsilon)\neq\phi$. Observe that $W^u(x,\epsilon)\subset W^u(x,\delta_0)=\lbrace x\rbrace$ and hence, $W^u(x,\epsilon)=\lbrace x\rbrace$. If $y\in X$ be such that $d(x,y)<\delta'$, then because of symmetric property of metric, $W^s(y,\epsilon)\cap W^u(x,\epsilon)\neq\phi$. But $W^u(x,\epsilon)=\lbrace x\rbrace$, we have $x\in W^s(y,\epsilon)$ which implies $y\in W^s(x,\epsilon)$ and $\epsilon\leq \delta_x$, we have $y\in W^s(x,\delta_x)$ proving $B_{\delta_x}(x)\subset W^s(x,\delta_x)$. But then $\mu(B_{\delta_x}(x))\leq\mu(W^s(x,\delta_x))=0$ which is a contradiction to the fact that $\mu$ is strictly positive.     
\end{proof}


\begin{Example}
Let $g$ be a pointwise expansive homeomorphism on an uncountable compact metric space $(Y,d_0)$. Let $p$ be a periodic point of $g$ with prime period $t$. Let $X=Y\cup E$, where $E$ is an infinite enumerable set. So, there is a bijection $r:\mathbb{N}\rightarrow E$. Consider $Q=\bigcup_{k\in\mathbb{N}} \lbrace 1,2,3\rbrace\times\lbrace k\rbrace\times\lbrace 0,1,2,3,...,t-1\rbrace$. Suppose $s:Q\rightarrow \mathbb{N}$ is a bijection. Then, consider the bijection $q:Q\rightarrow E$ given by $q(i,k,j)=r(s(i,k,j))$. Thus, any point $x\in E$ has the form $x=q(i,k,j)$ for some $(i,k,j)\in Q$. 
\medskip

Define a function $d:X\times X\rightarrow\mathbb{R}^+$ by  
\[d(a,b)=\begin{cases} 
0 & \textnormal {if $a=b$},
\\
d_0(a,b) & \textnormal {if $a,b\in Y$} 
\\
\frac{1}{k}+d_0(g^j(p),b) & \textnormal {if $a=q(i,k,j)$ and $b\in Y$}
\\
\frac{1}{k}+d_0(a,g^j(p)) & \textnormal {if $a\in Y$ and $b=q(i,k,j)$} 
\\
\frac{1}{k} & \textnormal {if $a=q(i,k,j)$,$b=q(l,k,j)$ and $i\neq l$} 
\\
\frac{1}{k}+\frac{1}{m}+d_0(g^j(p),g^r(p)) & \textnormal {if $a=q(i,k,j)$,$b=q(i,m,r)$ and $k\neq m$ or $j\neq r$}    
\end{cases}\] 
Then similarly as in \textit{Proposition 3.1} and \textit{Proposition 3.2} \cite{CC}, one can show that $(X,d)$ is a compact metric space.  
\pagebreak

Define a map $f:X\rightarrow X$ by 
\begin{center}
\[f(x)=\begin{cases}
g(x) & \textnormal {if $x\in Y$}
\\
q(i,k,(j+1))$ mod $t & \textnormal {if $x=q(i,k,j)$} 
\end{cases}\] 
\end{center}

The similar procedure as in \cite{CC} follows to prove that $f$ is a homeomorphism. Since for any $\delta>0$ there is $K\in\mathbb{N}^+$ such that $\frac{1}{k}<\delta$ for all $k\geq K$, $\Gamma_{\frac{1}{K}}(p)$ contains countably many points from $E$. Therefore, $f$ is not pointwise expansive. Since $X$ is non-atomic, $f$ is pointwise measure-expansive. Here, $f$ can not be strongly pointwise expansive because of the following theorem.       
\label{3.19}
\end{Example}

\begin{theorem}
If $f$ is a strongly pointwise measure-expansive map with at least one periodic point. Then, $f/_{Per(f)}$ is pointwise expansive.   
\label{3.20}
\end{theorem}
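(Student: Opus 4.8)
The plan is to pit the defining identity of strong pointwise measure-expansivity against point masses. Fix a periodic point $p\in Per(f)$ and let $\delta_p>0$ be a strong pointwise measure-expansivity constant at $p$, so that $\mu(\Gamma_{\delta_p}(p))=\mu(\lbrace p\rbrace)$ for \emph{every} Borel measure $\mu$ on $X$. I will show that this same $\delta_p$ serves as a pointwise-expansivity constant for $f|_{Per(f)}$ at $p$, that is, that $\Gamma_{\delta_p}(p)\cap Per(f)=\lbrace p\rbrace$. Since $Per(f)$ is $f$-invariant (if $f^k(x)=x$ then $f^k(f(x))=f(x)$, and $f^{k-1}(x)$ is a preimage of $x$ inside $Per(f)$), the restriction $f|_{Per(f)}$ is a well-defined bi-measurable self-map of the metric subspace $Per(f)$, and one has $\Gamma^{f|_{Per(f)}}_{\delta}(p)=\Gamma_\delta(p)\cap Per(f)$ for every $\delta>0$; hence the displayed equality is exactly what pointwise expansivity of $f|_{Per(f)}$ at $p$ demands, and as $p$ is arbitrary this proves the theorem.

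To establish $\Gamma_{\delta_p}(p)\cap Per(f)=\lbrace p\rbrace$, suppose for contradiction that there is $q\in Per(f)$ with $q\neq p$ and $q\in\Gamma_{\delta_p}(p)$, i.e. $d(f^n(p),f^n(q))\leq\delta_p$ for all $n\in\mathbb{Z}$. Take $\mu$ to be the Dirac measure at $q$, which is a (finite, regular, hence Borel) measure on $X$. Since $q\in\Gamma_{\delta_p}(p)$ we get $\mu(\Gamma_{\delta_p}(p))=1$, whereas $\mu(\lbrace p\rbrace)=0$ because $p\neq q$. This contradicts $\mu(\Gamma_{\delta_p}(p))=\mu(\lbrace p\rbrace)$, so no such $q$ exists.

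The only subtlety — and the one point where a naive attempt fails — is the choice of the testing measure. It is tempting, in keeping with the measure-theoretic flavour of the notion, to use the $f$-invariant normalized counting measure on the finite orbit $O(q)$; but that choice fails precisely when $q$ happens to lie on the orbit of $p$, since then $O(q)=O(p)$ and no contradiction appears. The point mass at $q$ circumvents this completely, because $\mu(\lbrace p\rbrace)=0$ holds as soon as $p\neq q$, regardless of whether $q\in O(p)$. I should also note that the hypothesis that $f$ has at least one periodic point is used only to make the conclusion non-vacuous, and that the very same Dirac-measure argument in fact gives $\Gamma_{\delta_x}(x)=\lbrace x\rbrace$ for every $x\in X$; the restricted statement for $f|_{Per(f)}$ is, however, all that is needed to conclude in Example \ref{3.19} that the homeomorphism constructed there is not strongly pointwise measure-expansive.
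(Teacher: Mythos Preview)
Your proof is correct and follows the same strategy as the paper: assume some $q\in Per(f)\setminus\{p\}$ lies in $\Gamma_{\delta_p}(p)$ and derive a contradiction by testing the identity $\mu(\Gamma_{\delta_p}(p))=\mu(\{p\})$ against a well-chosen atomic measure. The paper uses the counting measure on $O(p)\cup O(q)$ (each point weighted $\tfrac{1}{m+n}$), while you use the Dirac mass at $q$; your choice is slightly cleaner and, as you note, immediately yields the stronger conclusion $\Gamma_{\delta_x}(x)=\{x\}$ for every $x\in X$.

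One small correction to your commentary: the $f$-invariant counting measure on $O(q)$ does \emph{not} actually fail when $q\in O(p)$. In that case $O(q)=O(p)$, each point has mass $\tfrac{1}{m}$, and since $p,q\in\Gamma_{\delta_p}(p)$ with $p\neq q$ one still gets $\mu(\Gamma_{\delta_p}(p))\geq\tfrac{2}{m}>\tfrac{1}{m}=\mu(\{p\})$. So the ``subtlety'' you flag is not a genuine obstruction; nevertheless your Dirac-measure argument is the most direct route and your main proof stands.
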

\begin{proof}
Let $p\in Per(f)$ and $\delta_p$ be the strong measure-expansive constant for $f$ at $p$. Further, suppose $q\in Per(f)$ such that $q\in\Gamma_{\delta_p}(p)$. Let $m$ and $n$ be the period of $p$ and $q$ respectively. Let $\mu$ be the measure such that for each $x\in O(p)\cup O(q)$, we get $\mu(x)=\frac{1}{m+n}$. Then, $\mu(\Gamma_{\delta_p}(p))\geq \mu(p)+\mu(q)=\frac{2}{m+n}> \mu(\lbrace p\rbrace)$. This leads to a contradiction. Thus, $f/_{Per(f)}$ must be pointwise expansive. 
\end{proof}

The following discussion distinguishes the notion of expansivity and pointwise expansivity.    
\medskip

\begin{theorem}
If $f$ is pointwise $N$-expansive, then it is pointwise expansive. In particular, $N$-expansive homeomorphisms are pointwise expansive.  
\label{3.21}
\end{theorem}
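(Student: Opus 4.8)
The plan is to argue pointwise, shrinking the expansivity radius at each point until only the point itself survives. Fix $x\in X$ and choose, by pointwise $N$-expansivity, a $\delta_x>0$ with $|\Gamma_{\delta_x}(x)|\leq N$. Enumerate $\Gamma_{\delta_x}(x)=\lbrace x,y_1,\dots,y_k\rbrace$ with $k\leq N-1$ and the listed points pairwise distinct; the case $k=0$ will be covered by the same formula below, reading the empty minimum as $\delta_x$. So the only data I need to exploit is that each $y_i$ is an actual point of $X$ different from $x$.

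Next I would invoke the one genuinely-used fact, namely that distinct points are already separated at time $0$: for each $i$, $y_i\neq x$ gives $d(x,y_i)>0$. Set $\epsilon_x=\min\lbrace \delta_x, d(x,y_1),\dots,d(x,y_k)\rbrace>0$ and $\delta_x'=\epsilon_x/2$. Since $\delta_x'<\delta_x$ and the assignment $\delta\mapsto\Gamma_\delta(x)$ is monotone (immediate from the definition of $\Gamma_\delta(x)$), we get $\Gamma_{\delta_x'}(x)\subseteq\Gamma_{\delta_x}(x)=\lbrace x,y_1,\dots,y_k\rbrace$. On the other hand no $y_i$ can belong to $\Gamma_{\delta_x'}(x)$, since that would force $d(x,y_i)=d(f^0(x),f^0(y_i))\leq\delta_x'<d(x,y_i)$. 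Hence $\Gamma_{\delta_x'}(x)=\lbrace x\rbrace$, and as $x$ was arbitrary this shows $f$ is pointwise expansive.

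For the final sentence, recall that an $N$-expansive homeomorphism is one for which a single $\delta>0$ satisfies $|\Gamma_\delta(x)|\leq N$ for all $x\in X$; taking $\delta_x=\delta$ for every $x$ exhibits it as pointwise $N$-expansive, so the statement just proved applies. I do not expect any real obstacle here: the argument uses neither compactness, nor bi-measurability, nor continuity of $f$, and the only steps requiring any care are the monotonicity of $\Gamma_\delta(x)$ in $\delta$ and the triviality that $x\neq y$ already gives $d(x,y)>0$ at the $0$-th iterate.
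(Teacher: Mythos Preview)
Your proof is correct and follows essentially the same route as the paper: enumerate the finitely many points of $\Gamma_{\delta_x}(x)$, take the minimum distance from $x$ to the others, and use this to produce a smaller constant at which only $x$ survives. Your version is in fact slightly more careful than the paper's, since by halving to $\delta_x'=\epsilon_x/2$ you avoid the borderline case where some $y_i$ satisfies $d(x,y_i)=\epsilon_x$ exactly (recall $\Gamma_\delta$ is defined with a non-strict inequality), and your inclusion of $\delta_x$ in the minimum cleanly handles the case $k=0$.
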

\begin{proof}
Let $x\in X$ and $\delta_x$ be the pointwise $N$-expansive constant for $f$ at $x$. Let $y_1,...,y_k\in\Gamma_{\delta_x}(x)$, where $k\leq N$. If $\epsilon_x=$min$_{1\leq i\leq k} d(x,y_i)$, then $\Gamma_{\epsilon_x}(x)=\lbrace x\rbrace$. Since $x$ is arbitrary, $f$ is pointwise expansive with pointwise expansive constant $\epsilon_x$ at $x$.
\end{proof}

\begin{Corollary}
There is no pointwise $N$-expansive homeomorphism of an arc, circle or $2$-cell. 
\label{3.22}
\end{Corollary}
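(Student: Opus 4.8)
By Theorem~\ref{3.21} every pointwise $N$-expansive homeomorphism is pointwise expansive, so it suffices to prove that an arc, a circle and a $2$-cell carry no pointwise expansive homeomorphism; I argue by contradiction, assuming $f$ is one. Two facts will be used repeatedly. First, on a compact metric space $f$ and all its powers are uniformly continuous, so an argument like that of Theorem~\ref{3.1} shows that $f^{m}$ is pointwise expansive for every $m\neq 0$; thus I may pass to any convenient power. Second, $x$ fails to be a pointwise expansive point whenever there are points $y\neq x$, arbitrarily near $x$, with $\sup_{n\in\mathbb{Z}}d(f^{n}(x),f^{n}(y))$ arbitrarily small; this holds in particular when $d(f^{n}(x),f^{n}(y))\to 0$ as $n\to\pm\infty$ for all $y$ in a neighbourhood of $x$, since then the supremum is attained at a finite time and is small by continuity.

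\emph{The arc.} Replacing $f$ by $f^{2}$, I may assume $f$ is an increasing homeomorphism of $[0,1]$. If $\mathrm{Fix}(f)=[0,1]$ then $f=\mathrm{id}$, which is not pointwise expansive; otherwise pick $x$ in a complementary open interval $(a,b)$ of the closed set $\mathrm{Fix}(f)$. Iteration on $(a,b)$ is monotone, so $f^{n}$ drives every point of $(a,b)$ to one of $a,b$ as $n\to+\infty$ and to the other as $n\to-\infty$; hence $d(f^{n}(x),f^{n}(y))\to 0$ as $n\to\pm\infty$ for every $y\in(a,b)$, and $x$ is not a pointwise expansive point.

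\emph{The circle.} Passing to $f^{2}$ I may assume $f$ is orientation preserving with rotation number $\rho$. If $\rho=p/q$, then $f^{q}$ has a fixed point: either $\mathrm{Fix}(f^{q})=S^{1}$ and $f^{q}=\mathrm{id}$ is not pointwise expansive, or the arc argument inside a complementary arc of $\mathrm{Fix}(f^{q})$ produces a non-pointwise-expansive point. If $\rho$ is irrational and $f$ is minimal, then by Poincar\'e's classification $f$ is conjugate to the rotation $R_{\rho}$, an isometry, which is clearly not pointwise expansive. If $\rho$ is irrational and $f$ is not minimal, its minimal set is a Cantor set whose complementary arcs wander; fixing such an arc $I$, the arcs $f^{n}(I)$, $n\in\mathbb{Z}$, are pairwise disjoint, so $\sum_{n}|f^{n}(I)|\le 1$ and $|f^{n}(I)|\to 0$ as $|n|\to\infty$. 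Then for $x$ interior to $I$ and close to an endpoint $a$ one has $d(f^{n}(x),f^{n}(a))\le|f^{n}(I)|\to 0$ as $n\to\pm\infty$, so $a$ is not a pointwise expansive point. Each case contradicts pointwise expansivity of $f$.

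\emph{The $2$-cell.} This is the hard case, and I expect it to need the full surface theory rather than a one-line deduction, since pointwise expansivity is strictly weaker than expansivity and one cannot simply quote the Hiraide--Lewowicz theorem that $D^{2}$ admits no expansive homeomorphism. The plan is to rerun that argument with point-dependent data: a pointwise expansive homeomorphism of a compact surface still yields, at each $x$, local stable and unstable sets meeting in a product neighbourhood of size $\delta_{x}$, and the resulting singular stable/unstable decomposition is what the index (Euler-characteristic) obstruction rules out on the disk. The delicate point---and the main obstacle---is the localization: controlling how $\delta_{x}$ varies along leaves and near $\partial D^{2}$, where the classical proof silently uses a uniform expansive constant; alternatively one invokes the pointwise version recorded in \cite{R1970}. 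Combining the three cases with Theorem~\ref{3.21} yields the corollary.
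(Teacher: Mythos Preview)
The paper's proof is a one-line citation: it combines Theorem~\ref{3.21} with a corollary of Reddy~\cite{R1970}, which already records that an arc, a circle and a $2$-cell admit no pointwise expansive homeomorphism. You take the same first step (Theorem~\ref{3.21}) but then attempt to reprove Reddy's result from scratch; this is a genuinely different, more ambitious route, and your arc and circle arguments are essentially sound (though the justification of your ``second fact'' is a little glib: the finite index at which $\sup_{n}d(f^{n}(x),f^{n}(y))$ is attained may drift with $y$, so one should fix $N$ with $|f^{\pm n}(I)|<\delta$ for $|n|\ge N$ and then use continuity on the finitely many remaining iterates---this is easy to patch).

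The $2$-cell case, however, is not proved. What you give is a programme (``rerun Hiraide--Lewowicz with point-dependent constants'') together with an honest admission that the key localization step---controlling the variation of $\delta_{x}$ along stable/unstable leaves and near $\partial D^{2}$---is missing. That is precisely where the hard work lies, and without it the argument does not go through. Your closing ``alternatively one invokes the pointwise version recorded in \cite{R1970}'' is in fact exactly the paper's proof, so at that point you have circled back to the same approach; but as an independent argument the $2$-cell case remains a genuine gap.
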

\begin{proof}
This follows from Theorem \ref{3.21} and a \textit{Corollary }of \cite{R1970}. 
\end{proof}

\begin{Corollary}
Let $X$ be a compact metric space and let $f$ be a pointwise $N$-expansive homeomorphism. If $X$ is an infinite minimal set for $f$, then $f$ cannot be distal.
\label{3.23}
\end{Corollary}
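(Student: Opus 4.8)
My plan is to argue by contradiction, assuming $f$ is distal. By Theorem~\ref{3.21} it suffices to use that $f$ is pointwise expansive, and I first record the standard fact that an infinite minimal system has no isolated points. The engine of the proof is the enveloping semigroup $E(X)$, that is, the closure of $\{f^{n}:n\in\mathbb{Z}\}$ inside $X^{X}$ with the topology of pointwise convergence. Since $X$ is compact and $f$ is distal, Ellis's theorem gives that $E(X)$ is a group; since $X$ is minimal, each evaluation map $E(X)\to X$, $g\mapsto g(x)$, is continuous with compact (hence closed) image containing $\overline{\{f^{n}(x):n\in\mathbb{Z}\}}=X$, so $E(X)$ acts transitively on $X$. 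As $\{f^{n}\}$ is dense in $E(X)$ and evaluations are continuous, $D(x,y):=\sup_{n\in\mathbb{Z}}d(f^{n}x,f^{n}y)=\sup_{g\in E(X)}d(gx,gy)$ is a finite metric on $X$ with $D\ge d$, and the group property of $E(X)$ makes $D$ invariant under $E(X)$. The key point of $D$ is that $\Gamma_{\epsilon}(x)=\{y\in X:D(x,y)\le\epsilon\}$, the closed $D$-ball of radius $\epsilon$ about $x$.

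Next I would exploit homogeneity. Because $E(X)$ acts transitively and by $D$-isometries, any two closed $D$-balls of equal radius are isometric; in particular, fixing $x_{0}$ and a pointwise expansive constant $\epsilon_{0}$ at $x_{0}$, the equality $\{y:D(x_{0},y)\le\epsilon_{0}\}=\Gamma_{\epsilon_{0}}(x_{0})=\{x_{0}\}$ transfers to $\{y:D(x,y)\le\epsilon_{0}\}=\{x\}$ for every $x\in X$. Hence $D(x,y)>\epsilon_{0}$ for all $x\neq y$, which says precisely that $f$ is expansive with expansive constant (say) $\epsilon_{0}/2$.

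Finally, I would invoke the classical fact that an expansive homeomorphism of an \emph{infinite} compact metric space admits distinct points $x\neq y$ with $y\in W^{u}(x)$ or $y\in W^{s}(x)$: one takes distinct pairs whose distance tends to $0$, re-centers each at the first instant its two orbits separate by the expansive constant, passes to a subsequential limit (the lengths of these instants tend to infinity, so the limiting pair stays within the expansive constant on an entire half-orbit), and uses the standard inclusions $W^{u}(x,\epsilon_{0}/2)\subset W^{u}(x)$ and $W^{s}(x,\epsilon_{0}/2)\subset W^{s}(x)$; equivalently one may just cite the known statement that a distal expansive homeomorphism of a compact metric space has finite phase space. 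Such a pair is asymptotic, hence proximal, which a distal map cannot have. This contradiction shows $f$ is not distal, which since $X$ is infinite is exactly the claim. I expect the only genuine subtleties to be the clean verification that $D$ is an $E(X)$-invariant metric realising the sets $\Gamma_{\epsilon}$ (after which Steps~1--2 are purely formal) and the recollection of the asymptotic-pair input in Step~3; it is also worth noting that minimality is used solely through transitivity of $E(X)$ (hence homogeneity of $D$), while infiniteness is used only at the very end.
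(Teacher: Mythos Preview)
Your argument is correct. Both you and the paper begin with Theorem~\ref{3.21} to reduce from pointwise $N$-expansive to pointwise expansive; after that the paper simply cites the relevant corollary in Reddy~\cite{R1970} as a black box, whereas you supply a self-contained proof of that corollary. Your route is genuinely different in content: using Ellis's theorem you turn distality and minimality into a transitive action of the enveloping semigroup by $D$-isometries, where $D(x,y)=\sup_{n}d(f^{n}x,f^{n}y)$, and observe that $\Gamma_{\epsilon}(x)$ is exactly the closed $D$-ball. Homogeneity then upgrades pointwise expansivity at a single point to uniform expansivity with the same constant, and the classical fact that an expansive homeomorphism of an infinite compact metric space has an asymptotic (hence proximal) pair yields the contradiction. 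What your approach buys is transparency: it isolates the exact role of minimality (transitivity of $E(X)$, hence homogeneity of $D$) and of infiniteness (only needed for the asymptotic-pair step), and it actually proves the stronger intermediate statement that a distal minimal pointwise expansive homeomorphism of a compact space is expansive. The paper's approach buys brevity by deferring to \cite{R1970}. One minor remark: in your final step the inclusion $W^{u}(x,c)\subset W^{u}(x)$ for $c$ an expansivity constant does require the standard compactness argument you allude to, so it is worth stating it (or the equivalent ``expansive $+$ distal $\Rightarrow$ finite'' fact) as a citation rather than a sketch if you want the write-up to be fully rigorous.
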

\begin{proof}
This follows from Theorem \ref{3.21} and a \textit{Corollary} of \cite{R1970}.
\end{proof}


\begin{Example}
If the metric space is non-atomic, then strong pointwise measure-expansive systems are pointwise measure-expansive. Now, consider $X$ to be the subspace of $\mathbb{R}^n$ ($n\geq 1$) given by $\lbrace a,b\rbrace\cup\lbrace x_i\mid i\in\mathbb{Z}\rbrace$, where $a\neq b$, $x_i\neq x_j$ for $i\neq j$ and $\lbrace x_i\rbrace_{i\in\mathbb{Z}}$ is a bi-infinite sequence such that lim$_{i\to\infty} x_i=a$ and lim$_{i\to -\infty} x_i=b$. Then, the identity map $f$ on $X$ is not pointwise expansive and hence, by Theorem \ref{3.20} it is not strongly pointwise measure-expansive. But if $Y=X\setminus\lbrace a,b\rbrace$, then the identity map on $Y$ is pointwise expansive and strongly pointwise measure-expansive. 
\label{3.25}
\end{Example}

\section{Pointwise Specification} 
In this section, we consider the dynamics of continuous map on any metric space (unless otherwise stated). In particular, we study the effect of the presence of specification points on the nature of the system under different hypothesis.   

\begin{Definition}
Let $f$ be a continuous map on $X$ and let $x\in X$. Then, $x$ is said to be a specification (resp. periodic specification) point of $f$ if for every $\epsilon > 0$ there exists a positive integer $M(\epsilon)$ such that for any finite sequence $x = x_{1}, x_{2}, ..., x_{k}$ in $X$ and any integers $0\leq a_{1}\leq b_{1} < a_{2}\leq b_{2} < . . .< a_{k}\leq b_{k}$ with $a_{j} - b_{j-1} \geq M(\epsilon)$ for $1\leq j \leq k$, there exists $y\in X$ (resp. $y\in Per(f)$) such that $d(f^{i}(y), f^{i}(x_{j})) < \epsilon$ for $a_{j}\leq i\leq b_{j}$, $1\leq j\leq k$. A map $f$ is said to have pointwise specification (resp. pointwise periodic specification) property if every point is a specification (resp. periodic specification) point.  
\end{Definition} 
\medskip

If $f$ has specification (resp. periodic specification) property, then every point is a specification (resp. periodic specification) point. Further, every specification point of $f$ is a specification point of $f^{m}$ for any $m > 0$.  
\medskip

A continuous onto map $\pi : \oversl{X}\rightarrow X$ is said to be locally isometric covering map if for each $x\in X$, there exists an open set $U(x)$ containing $x$ such that $\pi^{-1}(U(x)) = \cup_{\alpha}U_{\alpha}$, where $\lbrace U_{\alpha}\rbrace$ is a pairwise disjoint family of open sets such that $\pi|_{U_{\alpha}} : U_{\alpha}\rightarrow U(x)$ is an isometry for each $\alpha$. 
\medskip

\begin{theorem}
Let $f$ and $g$ be continuous maps on $\overline{X}$ and $X$ respectively. Let $\pi: \oversl{X}\rightarrow X$ be a locally isometric covering map. Further, suppose that $\pi f = g\pi$ and there exists $\delta_{0} >0$ such that for each $\oversl{x}\in \oversl{X}$ and $0<\delta \leq\delta_{0}$, $\pi : U_{\delta}(\oversl{x})\rightarrow U_{\delta}(\pi(\oversl{x}))$ is an isometry. Then, the following statements are true  
\begin{enumerate}
\item[(i)] If $x$ is a specification point of $g$ then every point in $\pi^{-1}(x)$ is a specification point of $f$.
\item[(ii)] If $\oversl{x}$ is a specification point of $f$ then $\pi(\oversl{x})$ is a specification point of $g$.
\end{enumerate}
\end{theorem}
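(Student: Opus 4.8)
The plan is to prove the two directions separately; (ii) is short, and (i) is where the real difficulty lies. Both rest on two elementary consequences of the hypotheses. From $\pi f=g\pi$ one gets $\pi f^{n}=g^{n}\pi$ for all $n\ge0$ by induction. From the uniform local–isometry assumption on $\pi$ one gets: (a) for every $\oversl{a}\in\oversl{X}$, every $0<\delta\le\delta_{0}$, and every $z$ with $d(z,\pi(\oversl{a}))<\delta$ there is a unique $\oversl{z}\in\pi^{-1}(z)\cap U_{\delta}(\oversl{a})$, and $d(\oversl{z},\oversl{a})=d(z,\pi(\oversl{a}))$; and (b) any two distinct points of a single fibre $\pi^{-1}(z)$ lie at distance $\ge\delta_{0}$ (if $d(\oversl{a},\oversl{b})<\delta_{0}$ then $\oversl{b}\in U_{\delta_{0}}(\oversl{a})$, on which $\pi$ is injective, forcing $\oversl{a}=\oversl{b}$). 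In particular $\pi$ preserves distances exactly on every $\delta_{0}$-ball, so separations below $\delta_{0}$ are seen identically upstairs and downstairs.

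For (ii), suppose $\oversl{x}$ is a specification point of $f$ and set $x=\pi(\oversl{x})$. Given $\epsilon>0$, put $\epsilon_{0}=\min\{\epsilon,\delta_{0}\}$, let $\oversl{M}$ be the specification constant of $\oversl{x}$ for $f$ at level $\epsilon_{0}$, and declare $M(\epsilon)=\oversl{M}$. Given $x=x_{1},x_{2},\dots,x_{k}$ in $X$ and $0\le a_{1}\le b_{1}<\cdots<a_{k}\le b_{k}$ with $a_{j}-b_{j-1}\ge M(\epsilon)$, put $\oversl{x}_{1}=\oversl{x}$ and choose any $\oversl{x}_{j}\in\pi^{-1}(x_{j})$ for $j\ge2$ (possible since $\pi$ is onto). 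Applying the specification of $\oversl{x}$ to $\oversl{x}_{1},\dots,\oversl{x}_{k}$ with the same integers yields $\oversl{y}$ with $d(f^{i}(\oversl{y}),f^{i}(\oversl{x}_{j}))<\epsilon_{0}$ whenever $a_{j}\le i\le b_{j}$. Setting $y=\pi(\oversl{y})$ and using that then $f^{i}(\oversl{x}_{j})\in U_{\delta_{0}}(f^{i}(\oversl{y}))$, where $\pi$ is an isometry, we get $d(g^{i}(y),g^{i}(x_{j}))=d(\pi f^{i}(\oversl{y}),\pi f^{i}(\oversl{x}_{j}))=d(f^{i}(\oversl{y}),f^{i}(\oversl{x}_{j}))<\epsilon$. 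Hence $y$ witnesses the specification requirement for $x$.

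For (i), fix $\oversl{x}\in\pi^{-1}(x)$ with $x$ a specification point of $g$. Given $\epsilon>0$, use uniform continuity of $f$ (at the scale $\delta_{0}$) to pick $0<\epsilon'<\delta_{0}/2$ with $d(\oversl{a},\oversl{b})<\epsilon'\Rightarrow d(f(\oversl{a}),f(\oversl{b}))<\delta_{0}/2$, and let $M(\epsilon)$ be the specification constant of $x$ for $g$ at level $\epsilon'$. Given $\oversl{x}=\oversl{x}_{1},\dots,\oversl{x}_{k}$ in $\oversl{X}$ and admissible integers (for concreteness take $a_{1}=0$), set $x_{j}=\pi(\oversl{x}_{j})$, so $x_{1}=x$ and $g^{i}(x_{j})=\pi(f^{i}(\oversl{x}_{j}))$; the specification of $x$ produces $y$ with $d(g^{i}(y),g^{i}(x_{j}))<\epsilon'$ on each block $[a_{j},b_{j}]$. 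Since $d(y,x)<\epsilon'<\delta_{0}$, let $\oversl{y}$ be the unique preimage of $y$ in $U_{\epsilon'}(\oversl{x})$. I would then argue block by block that $f^{i}(\oversl{y})$ stays in the \emph{correct sheet}: if $d(f^{i}(\oversl{y}),f^{i}(\oversl{x}_{j}))<\epsilon'$, then $\pi(f^{i+1}(\oversl{y}))=g^{i+1}(y)$ is within $\epsilon'$ of $g^{i+1}(x_{j})=\pi(f^{i+1}(\oversl{x}_{j}))$, while $d(f^{i+1}(\oversl{y}),f^{i+1}(\oversl{x}_{j}))<\delta_{0}/2$ by the choice of $\epsilon'$; since distinct lifts of $g^{i+1}(x_{j})$ are $\ge\delta_{0}$ apart, $f^{i+1}(\oversl{y})$ must be the lift lying in $U_{\epsilon'}(f^{i+1}(\oversl{x}_{j}))$, whence $d(f^{i+1}(\oversl{y}),f^{i+1}(\oversl{x}_{j}))=d(g^{i+1}(y),g^{i+1}(x_{j}))<\epsilon'\le\epsilon$. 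For the first block this induction is anchored by $\oversl{y}\in U_{\epsilon'}(\oversl{x})=U_{\epsilon'}(\oversl{x}_{1})$, giving $d(f^{i}(\oversl{y}),f^{i}(\oversl{x}))<\epsilon'$ for $0\le i\le b_{1}$.

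The step I expect to be the genuine obstacle is anchoring this induction on the later blocks $j\ge2$: it only starts once $f^{a_{j}}(\oversl{y})$ is known to lie in $U_{\epsilon'}(f^{a_{j}}(\oversl{x}_{j}))$, but during the gap $(b_{j-1},a_{j})$ the orbit of $\oversl{y}$ shadows no reference orbit, so it may have drifted into a different sheet of $\pi$ by time $a_{j}$, in which case $d(f^{a_{j}}(\oversl{y}),f^{a_{j}}(\oversl{x}_{j}))\ge\delta_{0}-\epsilon'$ and the block fails. This is a real gap, not a formality: without a further hypothesis such as compactness of $\oversl{X}$ the bare statement (i) can fail — take $\oversl{X}=\mathbb{R}$, $X=\mathbb{R}/\mathbb{Z}$ with $\pi$ the quotient, $g(t)=2t\bmod1$ (which has the specification property, so every point of $X$ is a specification point) and $f(t)=2t$, for which no point of $\mathbb{R}$ is a specification point. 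So I would expect the proof to invoke compactness of $\oversl{X}$ (whence the cover is finite) and, even then, a device to keep the lifted orbit in the controlled sheet across the gaps — e.g. feeding the specification of $g$ extra guiding blocks just before each $a_{j}$, or path–lifting when $\oversl{X}$ is locally nice, or using the deck group of a regular cover to realign sheets. I would isolate this as a lemma — a $g$-orbit that $\epsilon'$-traces the projected blocks and whose chosen initial lift lies near $\oversl{x}$ lifts to an $f$-orbit $\epsilon$-tracing the upstairs blocks — and put the bulk of the effort there; the rest is bookkeeping with (a) and (b).
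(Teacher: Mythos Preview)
Your treatment of (ii) is correct and is essentially the paper's own argument: lift the data via surjectivity of $\pi$, apply specification upstairs, project the tracing point, and use that $\pi$ is an isometry on $\delta_{0}$-balls to carry the estimates down.

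For (i) your diagnosis is exactly right, and the paper's proof commits precisely the error you anticipated. After obtaining the downstairs tracer $y$, the paper picks ``some $\oversl{y}\in\oversl{X}$'' with $\pi(\oversl{y})=y$ and then asserts that from $d\bigl(\pi(f^{i}(\oversl{y})),\pi(f^{i}(\oversl{x}_{j}))\bigr)<\epsilon\le\delta_{0}$, together with $\pi$ being an isometry on $U_{\delta_{0}}(f^{i}(\oversl{y}))$, one obtains $\oversl{d}(f^{i}(\oversl{y}),f^{i}(\oversl{x}_{j}))<\epsilon$. That inference is invalid: the local isometry only guarantees that \emph{some} lift of $g^{i}(x_{j})$ lies in $U_{\epsilon}(f^{i}(\oversl{y}))$, not that the particular lift $f^{i}(\oversl{x}_{j})$ does. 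No control over which sheet the lifted orbit occupies across the gaps is provided, and no auxiliary device (compactness, finiteness of the cover, deck transformations, inserted guiding blocks) is invoked to repair it. This is the very ``sheet-drift'' problem you isolated.

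Your counterexample shows that (i) is in fact false as stated. With $\oversl{X}=\mathbb{R}$, $X=\mathbb{R}/\mathbb{Z}$, $\pi$ the quotient map, $f(t)=2t$ and $g(t)=2t\ (\mathrm{mod}\ 1)$, one has $\pi f=g\pi$ and $\pi$ restricts to an isometry on every ball of radius $\delta_{0}=\tfrac{1}{2}$. The doubling map $g$ has the specification property, so every point of $X$ is a specification point of $g$. Yet for any $x\in\mathbb{R}$, any $\epsilon=1$ and any proposed $M$, choosing $x_{1}=x$, $x_{2}$ with $|x_{2}-x|>2$, $a_{1}=b_{1}=0$, $a_{2}=b_{2}=M$ forces a tracer $y$ to satisfy both $|y-x|<1$ and $|y-x_{2}|<2^{-M}\le 1$, which is impossible. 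Thus part (i) of the theorem, and the paper's proof of it, are genuinely defective; your instinct that an additional hypothesis (compactness of $\oversl{X}$, or a finite-sheeted cover) is required is well founded.
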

\begin{proof}
(i) Suppose that $x$ is a specification point of $g$. For any $0< \epsilon\leq \delta_{0}$, choose an integer $M(\epsilon)$ due to the specification of $g$ at $x$. Let $\oversl{x}\in \pi^{-1}(x)$. Consider a sequence of points $\oversl{x} = \oversl{x}_{1}, \oversl{x}_{2}, . . ., \oversl{x}_{k}$ and $0\leq a_{1}\leq b_{1} < a_{2}\leq b_{2}<. . .< a_{k}\leq b_{k}$ with $a_{j} - b_{j-1} \geq M(\epsilon)$ for all $1\leq j \leq k$. Choose $\lbrace x_{i} : 1\leq i\leq k\rbrace$ such that $x_{i} = \pi(\oversl{x}_{i})$ for all $1\leq i\leq k$. Choose $y\in X$ such that $d(g^{i}(y), g^{i}(x_{j})) < \epsilon \leq \delta_{0}$ for all $a_{j}\leq i\leq b_{j}$ and all $1\leq j\leq k$ or $d(g^{i}(\pi(\oversl{y})), g^{i}(\pi(\oversl{x}_{j}))) < \epsilon \leq \delta_{0}$ for all $a_{j}\leq i\leq b_{j}$ and all $1\leq j\leq k$ or $d(\pi (f^{i}(\oversl{y})), \pi(f^{i}(\oversl{x}_{j}))) < \epsilon \leq \delta_{0}$ for all $a_{j}\leq i\leq b_{j}$ and all $1\leq j\leq k$, for some $\oversl{y}\in \oversl{X}$. Since $\pi$ is an isometry on $U_{\delta}(f^{i}(\oversl{y}))$ for all $a_{j}\leq i\leq b_{j}$ and all $1\leq j\leq k$, we get that $\oversl{d}(f^{i}(\oversl{y}), f^{i}(\oversl{x}_{j})) < \epsilon \leq \delta_{0}$ for all $a_{j}\leq i\leq b_{j}$ and all $1\leq j\leq k$. 
\medskip

(ii) Suppose that $\oversl{x}$ is a specification point of $f$. For any $0< \epsilon\leq \delta_{0}$, choose an integer $M(\epsilon)$ due to the specification of $\oversl{x}$. Let $x = \pi(\oversl{x})$. Consider a sequence of points $x = x_{1}, x_{2}, . . ., x_{k}$ and $0\leq a_{1}\leq b_{1} < a_{2}\leq b_{2}<. . .< a_{k}\leq b_{k}$ with $a_{j} - b_{j-1} \geq M(\epsilon)$ for all $1\leq j \leq k$. Choose $\lbrace \oversl{x}_{i} : 1\leq i\leq k\rbrace$ such that $x_{i} = \pi(\oversl{x}_{i})$ and $\oversl{x}_{1} = \oversl{x}$ for all $1\leq i\leq k$. Choose $\oversl{y}\in \oversl{X}$ such that $d(f^{i}(\oversl{y}), f^{i}(\oversl{x}_{j})) < \epsilon \leq \delta_{0}$ for all $a_{j}\leq i\leq b_{j}$ and all $1\leq j\leq k$. 
Since $\pi$ is an isometry on $U_{\epsilon}(\oversl{y})$, $\oversl{d}(f^{i}(\oversl{y}), f^{i}(\oversl{x}_{j})) =  d(\pi f^{i}(\oversl{y}), \pi f^{i}(\oversl{x}_{j})) = d(g^{i}(\pi(\oversl{y})), g^{i} \pi (\oversl{x}_{j})) < \epsilon \leq \delta_{0}$ or $d(g^{i}(y), g^{i} (x_{j})) < \epsilon \leq \delta_{0}$ for $y =\pi(\oversl{y})$, for all $a_{j}\leq i\leq b_{j}$ and all $1\leq j\leq k$. Hence $x$ is a specification point of $g$. 
\end{proof} 

\begin{Definition} Let $f$ be a continuous map on $X$ and let $x\in X$. Then, 
\begin{enumerate} 
\item [(i)] $x$ is said to be a sensitive point of $f$ if there exists $\delta_{x} > 0$ such that for every open set $U$ containing $x$ there exists $y\in U$ such that $d(f^{n}(x), f^{n}(y))>\delta_{x}$ for some $n\in \mathbb{N}^{+}$. Such a constant $\delta_{x}$ is said be a sensitivity constant for $f$ at $x$. A map $f$ is said to be pointwise sensitive if every point is a sensitive point of $f$.
\item [(ii)] $x$ is said to be a topologically transitive (topologically mixing) point of $f$ if for any open set $U$ containing $x$ and any non-empty open set $V$ there exists $n\in \mathbb{N}^{+}$ ($N\in \mathbb{N}^{+}$) such that $f^{n}(U)\cap V \neq \phi$ (for all $n\geq N$). 
\item [(iii)] $f$ is said to have a dense set of periodic points at $x$ if every deleted open set of $x$ contains a periodic point of $f$. 
\end{enumerate}   
\end{Definition}

It is easy to verify that every $N$-expansive map on a metric space without isolated points is sensitive and hence, it is pointwise sensitive. But we don't know whether pointwise sensitivity implies sensitivity. Further, observe that $f$ is topologically mixing (resp. topologically transitive) if and only if every point is topologically mixing (resp. topologically transitive) point of $f$. Moreover, every topologically mixing point is a topologically transitive point. 

\begin{Definition}
Let $f$ be a continuous map on $X$ and let $x\in X$. Then, $x$ is said to be a Devaney chaotic point of $f$ if the following holds
\begin{enumerate}
\item[(i)] $x$ is topologically transitive of $f$;
\item[(ii)] $f$ has dense set of periodic points at $x$;
\item[(iii)] $x$ is sensitive point of $f$. 
\end{enumerate}

A continuous map $f$ is said to be pointwise Devaney chaotic if every point in $X$ is Devaney chaotic.  
\label{4.7}
\end{Definition}

\begin{theorem}
Every specification point of a continuous map $f$ on $X$ is a topologically mixing point. 
\label{4.8}
\end{theorem}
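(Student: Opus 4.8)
The plan is to localize the classical argument that the specification property forces topological mixing. Let $x$ be a specification point of $f$, let $U$ be an open set containing $x$, and let $V$ be a non-empty open set; I must produce $N\in\mathbb{N}^{+}$ with $f^{n}(U)\cap V\neq\phi$ for all $n\geq N$. First I would pass to balls: choose $v\in V$ and $\epsilon>0$ so that $B(x,\epsilon)\subseteq U$ and $B(v,\epsilon)\subseteq V$, and let $M=M(\epsilon)$ be the integer supplied by the specification of $f$ at $x$ for this $\epsilon$. I claim $N=M$ works.

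Fix $n\geq M$. I would invoke the specification of $f$ at $x$ with two one-point blocks: the point $x_{1}=x$, which must head the sequence, observed on the window $[a_{1},b_{1}]=[0,0]$, and a point $x_{2}$ observed on the window $[a_{2},b_{2}]=[n,n]$, chosen with $f^{n}(x_{2})=v$, i.e. a preimage of $v$ under $f^{n}$. The gap between the two blocks is $a_{2}-b_{1}=n\geq M$, so the configuration is admissible and specification yields $y\in X$ with $d(y,x)<\epsilon$ and $d(f^{n}(y),f^{n}(x_{2}))=d(f^{n}(y),v)<\epsilon$. The first estimate gives $y\in B(x,\epsilon)\subseteq U$; the second gives $f^{n}(y)\in B(v,\epsilon)\subseteq V$; hence $f^{n}(y)\in f^{n}(U)\cap V$. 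Since $n\geq M$ was arbitrary, $x$ is a topologically mixing point, and therefore (as noted after the definition) a topologically transitive point as well.

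The step that needs care — and the one I expect to be the real obstacle — is the selection of the second block point $x_{2}$: for the conclusion to be uniform in $n$ one needs, for every $n\geq M$, a point whose $n$-th iterate lies deep inside $V$. Taking $x_{2}\in f^{-n}(\{v\})$ provides this whenever $f^{n}$ is onto, so the cleanest route uses surjectivity of $f$; otherwise one should read the specification property in its orbit-tracing form and place the length-one piece $(v)$ at time $n$, obtaining $d(f^{n}(y),v)<\epsilon$ directly. Once that placement is secured the rest is routine ball-shrinking bookkeeping as above: no uniform continuity is invoked, only singleton blocks appear, and the estimate holds simultaneously for all $n\geq M$, which is precisely what separates the mixing conclusion from mere transitivity at $x$.
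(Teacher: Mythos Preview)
Your argument is correct and mirrors the paper's proof almost verbatim: both fix a point $v\in V$, pass to $\epsilon$-balls inside $U$ and $V$, and for each $n\geq M(\epsilon)$ apply specification at $x$ with $x_{1}=x$ on $[0,0]$ and $x_{2}\in f^{-n}(\{v\})$ on $[n,n]$ to obtain a tracing point in $B_{\epsilon}(x)$ whose $n$-th iterate lands in $B_{\epsilon}(v)$. The surjectivity concern you flag is well spotted --- the paper simply writes $x_{2}=f^{-n}(y)$ without comment, so the same tacit assumption is present there; note, however, that your proposed alternative of ``placing the piece $(v)$ at time $n$'' does not literally fit the paper's definition of specification, which traces forward iterates $f^{i}(x_{j})$ of chosen base points rather than prescribed orbit segments.
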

\begin{proof}
Let $f$ be a continuous map on $X$ and let $x\in X$ be a specification point of $f$. Assume that $U$ is any open set containing $x$ and $V$ is any non-empty open set in $X$. Fix $y\in V$ and $\epsilon > 0$ such that $B_{\epsilon}(x) \subset U$ and $B_{\epsilon}(y) \subset V$. Choose a positive integer $M(\epsilon)$ correspond to the specification point $x$ of $f$. Thus for any $n\geq M(\epsilon)$, set $a_{1} = 0 = b_{1}$, $a_{2} = n = b_{2}$, $x_{1} = x$, $x_{2} = f^{-n}(y)$. Choose $z_{n}\in X$ such that $d(f^{i}(z_{n}), f^{i}(x_{j})) < \epsilon$ for all $a_{j}\leq i\leq b_{j}$ and all $1\leq j\leq 2$. Hence, $z_{n}\in B_{\epsilon}(x)$ and $f^{n}(z_{n})\in B_{\epsilon}(y)$. Since this is true for all $n\geq M(\epsilon)$ and any open set $U$ containing, we get that $x$ is a topologically mixing point of $f$. 
\end{proof}

\begin{Corollary}
If a continuous map $f$ on $X$ has pointwise specification property, then $f$ is topologically mixing.
\label{4.9}
\end{Corollary}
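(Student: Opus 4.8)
The plan is to derive this directly from Theorem \ref{4.8} together with the characterization of topological mixing in terms of points that was recorded just before Definition \ref{4.7}. Recall that the observation there states that $f$ is topologically mixing if and only if every point of $X$ is a topologically mixing point of $f$. So the whole argument is a short chain of implications.

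First I would unwind the hypothesis: $f$ has pointwise specification property, which by definition means every point $x\in X$ is a specification point of $f$. Next, for an arbitrary $x\in X$, apply Theorem \ref{4.8} to conclude that $x$ is a topologically mixing point of $f$. Since $x$ was arbitrary, every point of $X$ is a topologically mixing point. Finally, invoke the stated equivalence (topologically mixing $\iff$ every point is a topologically mixing point) to conclude that $f$ itself is topologically mixing.

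There is essentially no obstacle here; the corollary is a formal consequence of Theorem \ref{4.8} and the pointwise reformulation of mixing. The only thing to be a little careful about is making sure the quantifiers line up — Theorem \ref{4.8} gives the conclusion for each individual specification point, and pointwise specification supplies that every point qualifies, so the universal quantifier needed for ``$f$ is topologically mixing'' is exactly met. I would write the proof in one or two sentences: by hypothesis every $x \in X$ is a specification point, hence by Theorem \ref{4.8} every $x \in X$ is a topologically mixing point, and therefore $f$ is topologically mixing.
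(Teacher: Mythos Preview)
Your proposal is correct and matches the paper's intended argument: the corollary is stated immediately after Theorem \ref{4.8} with no proof, precisely because it follows formally from that theorem together with the observation (noted just before Definition \ref{4.7}) that $f$ is topologically mixing if and only if every point is a topologically mixing point.
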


\begin{theorem}
If $x$ is a periodic specification point of a continuous surjective map $f$ on $X$, then $f$ has dense set of periodic points at $x$.   
\end{theorem}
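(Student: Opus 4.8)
The plan is to argue by contradiction: suppose some deleted open set of $x$, say $U\setminus\{x\}$ with $U$ open and $x\in U$, contains no periodic point of $f$, so that $Per(f)\cap U\subseteq\{x\}$, and fix $\epsilon_{0}>0$ with $B_{\epsilon_{0}}(x)\subseteq U$. The first thing I would record is that $x$ must then itself be periodic: feeding the periodic specification property at $x$ the trivial one-piece data $k=1$, $x_{1}=x$, $a_{1}=b_{1}=0$ yields $y\in Per(f)$ with $d(y,x)<\epsilon_{0}$, hence $y\in Per(f)\cap U\subseteq\{x\}$, so $y=x\in Per(f)$. Write $t$ for its least period and set $O(x)=\{x,f(x),\dots,f^{t-1}(x)\}$, a finite and therefore closed set.

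Next I would manufacture, in contradiction to the assumption, a periodic point $\epsilon$-close to $x$ but distinct from it. Choose $w\in X\setminus O(x)$ and put $\rho=d(w,O(x))>0$ (if no such $w$ exists then $X=O(x)$ is finite, and when it has more than one point a direct check shows $x$ cannot be a specification point, so that case does not occur). Let $\epsilon=\frac{1}{2}\min\{\epsilon_{0},\rho\}$, let $M=M(\epsilon)$ be the periodic-specification constant at $x$, and fix an integer $N\ge M$. Here surjectivity enters: $f^{N}$ is onto, so I pick $v\in X$ with $f^{N}(v)=w$ and apply periodic specification at $x$ to the two pieces $x_{1}=x$ at time $0$ and $x_{2}=v$ at time $N$ (the gap $N\ge M(\epsilon)$ is admissible). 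This produces $y\in Per(f)$ with $d(y,x)<\epsilon$ and $d(f^{N}(y),f^{N}(v))<\epsilon$; the first inequality forces $y\in Per(f)\cap U\subseteq\{x\}$, hence $y=x$, and then the second reads $d(f^{N}(x),w)<\epsilon$. Since $f^{N}(x)\in O(x)$ this gives $\rho\le d(w,f^{N}(x))<\epsilon\le\frac{\rho}{2}$, a contradiction. Hence $U\setminus\{x\}$ contains a periodic point, and as $U$ was arbitrary, $f$ has a dense set of periodic points at $x$.

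The step I expect to be the main obstacle — and the reason surjectivity is assumed — is exactly the manufacture of a periodic point different from $x$: the one-piece specification data only delivers a periodic point near $x$, which could be $x$ itself, so one is forced to confront the specification property with an auxiliary orbit-piece that no orbit lying in the finite set $O(x)$ can shadow, and arranging such a piece to occur at the time $N$ dictated by $\epsilon$ is precisely what necessitates pulling a far-away point back through $f^{N}$.
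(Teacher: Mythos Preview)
Your argument is correct and follows essentially the same route as the paper: both proofs hinge on feeding the periodic specification at $x$ a second orbit piece whose value at the prescribed time lies outside an $\epsilon$-neighborhood of $\mathcal{O}(x)$, so that the tracing periodic point cannot coincide with $x$, and both use surjectivity of $f$ to arrange such a piece at the required time. The only organisational difference is that the paper argues directly and splits into the cases ``$x$ non-periodic'' (where any periodic tracing point is automatically $\neq x$) and ``$x$ periodic'' (where one chooses $x_{2}$ with $d(f^{M}(x_{2}),\mathcal{O}(x))>\epsilon$), whereas your contradiction framing shows at once that $x$ must be periodic and then runs the same far-away-piece trick; your version is also more explicit about the role of surjectivity and about the degenerate case $X=\mathcal{O}(x)$, which the paper passes over silently.
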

\begin{proof}
Let $U$ be any deleted open set of $x$ and choose $\epsilon > 0$ such that $B_{\epsilon}(x)\setminus\lbrace x\rbrace \subset U$. Choose positive integer $M=M(\epsilon)$ correspond to the periodic specification point $x$ of $f$. Let $a_{1} = 0 = b_{1}$, $a_{2} = M = b_{2}$ and consider $x_{1}$, $x_{2}$, where $x_1=x$ and $x_{2}\in X$ is arbitrary. Then, there exists a periodic point $z\in X$ satisfying $d(f^{i}(z), f^{i}(x_{j})) < \epsilon$ for $a_{j}\leq i\leq b_{j}$, $1\leq j\leq 2$. If $x$ is non-periodic point, then $z\in B_{\epsilon}(x)\setminus \lbrace x\rbrace\subset U$. If $x$ is periodic, then choose $x_2$ such that $d(f^M(x_2),\mathcal{O}(x))>\epsilon$. Then, $z\neq x$ and hence, $z\in U$. Since $U$ was chosen arbitrary, $x$ has dense set of periodic points at $x$.  
\end{proof}

\begin{theorem}
Let $f$ be a continuous map on $X$. If $x\in X$ is topologically transitive point such that $f$ has dense set of periodic points at $x$, then $x$ is a sensitive point of $f$.
\end{theorem}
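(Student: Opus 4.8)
The plan is to adapt the classical Banks--Brooks--Cairns--Davis--Stacey argument (transitivity together with a dense set of periodic points forces sensitivity) to the pointwise setting, observing that the argument in fact produces a \emph{uniform} sensitivity constant while using both hypotheses only at the single point $x$. I would begin with two preliminary observations. Since by hypothesis every deleted open neighbourhood of $x$ contains a periodic point, the set of all periodic points of $f$ accumulates at $x$; because a finite set has no accumulation point, $f$ must possess at least two distinct periodic orbits, and distinct periodic orbits are disjoint, so there are periodic points $q_{0},q_{1}$ with $\mathcal{O}(q_{0})\cap\mathcal{O}(q_{1})=\phi$. Put $8\delta=d(\mathcal{O}(q_{0}),\mathcal{O}(q_{1}))>0$ (positive, as the orbits are finite, hence compact, and disjoint). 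Since the orbit sets are $f$-invariant, the elementary estimate $d(A,B)\leq d(z,A)+d(z,B)$ shows that every point of $X$, in particular $x$, lies at distance at least $4\delta$ from at least one of $\mathcal{O}(q_{0}),\mathcal{O}(q_{1})$.

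I claim $\delta$ is a sensitivity constant for $f$ at $x$. Fix an open set $U$ containing $x$ and replace it by $U'=U\cap B_{\delta}(x)$. Using the density of periodic points at $x$, pick a periodic point $p\in U'$, of period $n$. Using the observation above, pick $q\in\{q_{0},q_{1}\}$ with $d(x,\mathcal{O}(q))\geq 4\delta$, and set $V=\bigcap_{i=0}^{n}f^{-i}\big(B_{\delta}(f^{i}(q))\big)$, which is open by continuity of $f$ and contains $q$, hence is non-empty. Now apply topological transitivity at $x$ to the pair $(U',V)$: there are $y\in U'$ and $k\in\mathbb{N}^{+}$ with $f^{k}(y)\in V$. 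Let $m$ be the least multiple of $n$ with $m\geq k$, so $0\leq m-k\leq n$; then $f^{m}(y)\in B_{\delta}(f^{m-k}(q))$ by the definition of $V$, and $f^{m}(p)=p$ since $n$ divides $m$.

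The conclusion then follows by chaining triangle inequalities. Since $p\in B_{\delta}(x)$ and $f^{m-k}(q)\in\mathcal{O}(q)$, we get $d(f^{m}(p),f^{m-k}(q))=d(p,f^{m-k}(q))\geq d(x,\mathcal{O}(q))-d(x,p)\geq 3\delta$; combining this with $d(f^{m}(y),f^{m-k}(q))<\delta$ yields $d(f^{m}(y),f^{m}(p))>2\delta$, whence $d(f^{m}(x),f^{m}(y))+d(f^{m}(x),f^{m}(p))>2\delta$ and therefore $\max\{d(f^{m}(x),f^{m}(y)),\,d(f^{m}(x),f^{m}(p))\}>\delta$. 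As both $y$ and $p$ belong to $U$ and $m\geq k\geq 1$, this exhibits a point of $U$ whose $m$-th iterate is more than $\delta$ away from $f^{m}(x)$; since $U$ was an arbitrary open neighbourhood of $x$, the point $x$ is sensitive with sensitivity constant $\delta$.

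The main obstacle is the bookkeeping in the last paragraph, and specifically the one genuinely delicate point: the separating orbit $\mathcal{O}(q)$ is chosen far from $x$ rather than from $f^{m}(x)$ (the latter cannot be prescribed in advance, since $m$ is only determined after the transitivity step), so the comparison must be routed through $p$, which stays within $\delta$ of $x$, and only then transported to $f^{m}(x)$ by a final triangle inequality --- this is precisely why the slack $4\delta$ is needed in order to survive down to $\delta$. A secondary point that must be handled carefully is that $V$ has to be built from the first $n$ iterates of $q$, not from $q$ alone, so that $f^{m}(y)$ is guaranteed to land near some point of $\mathcal{O}(q)$ no matter what the residue $m-k\in\{0,\dots,n\}$ turns out to be. Surjectivity of $f$ is not required for this argument.
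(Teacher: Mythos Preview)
Your proof is correct and follows essentially the same route as the paper: both are pointwise adaptations of the Banks--Brooks--Cairns--Davis--Stacey argument, constructing the auxiliary open set $V=\bigcap_{i=0}^{n}f^{-i}(B_{\delta}(f^{i}(q)))$, invoking transitivity at $x$ to land in $V$, rounding the hitting time up to a multiple of the period of the nearby periodic point $p$, and finishing with the same triangle-inequality chain to force one of $p,y$ to separate from $x$. The only cosmetic difference is that you justify the existence of a periodic orbit at distance $\geq 4\delta$ from $x$ via two disjoint periodic orbits (the full BBCDS device), whereas the paper, since $x$ is fixed throughout, simply asserts the existence of a single such orbit directly.
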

\begin{proof}
Since $f$ has dense set of periodic points at $x$, we can choose $\delta > 0$ and a periodic point $q$ such that $d(x, \mathcal{O}(q)) \geq \frac{\delta}{2}$. We claim that $\eta = \frac{\delta}{8}$ is a sensitivity constant for $f$ at $x$. It is clear that $d(x, \mathcal{O}(q)) \geq \frac{\delta}{2} = 4\eta$. Let $N$ be an open set containing $x$ and choose a periodic point $p$ in $N'\setminus \lbrace x\rbrace$, where $N'=(N\cap B_{\eta}(x))$. Suppose that $p$ has prime period $n$ and that $W = \cap_{i = 0}^{n}f^{-i}(B_{\eta}(f^{i}(q)))$. Clearly, $W$ is a non empty open set in $X$. Since $x$ is a topologically transitive point, there exists $y\in N'$ and $k\in \mathbb{N}^{+}$ such that $f^{k}(y)\in W$. 
Let $j$ be the integer part of $(\frac{k}{n} + 1)$. Since $0\leq nj-k\leq n$, we have 
\[f^{nj}(y) = f^{nj-k+k}(y) = f^{nj-k}(f^{k}(y))\in f^{nj-k}(W) \subset B_{\eta}(f^{nj-k}(q))\] 
By the triangle inequality,  
\begin{align*}
d(x, f^{nj-k}(q)) &\leq d(x, p) + d(p, f^{nj-k}(q))\\ 
&\leq d(x, p) + d(p, f^{nj}(y)) + d(f^{nj}(y), f^{nj-k}(q))
\end{align*}
or
\begin{align*}
d(f^{nj}(p), f^{nj}(y)) &= d(p, f^{nj}(y)) \\
&\geq d(x, f^{nj-k}(q)) - d(x, p) - d(f^{nj}(y), f^{nj-k}(q))
\end{align*}
Since $p\in N'\setminus \lbrace x\rbrace$, $f^{nj}(y)\in B_{\eta}(f^{nj-k}(q))$, $d(x, \mathcal{O}(q)) > 4\eta$ and $f^{nj-k}\in \mathcal{O}(q)$, we have $-d(x, p) > -\eta$, $-d(f^{nj}(y), f^{nj-k}(q)) > -\eta$ and $d(x, f^{nj-k}(q)) > 4\eta$. Therefore, 
\begin{align*}
2\eta &= 4\eta - \eta - \eta  \\
&< d(x, f^{nj-k}(q)) - d(x, p) - d(f^{nj}(y), f^{nj-k}(q))\\
&< d(f^{nj}(p), f^{nj}(y)) \\
&\leq d(f^{nj}(p), f^{nj}(x)) + d(f^{nj}(x), f^{nj}(y))
\end{align*}
The last inequality implies that $d(f^{nj}(p), f^{nj}(x)) > \eta$ or $d(f^{nj}(x), f^{nj}(y)) > \eta$. Hence, $\eta$ is a sensitivity constant for $f$ at $x$.
\end{proof}

\begin{Corollary}
Every continuous surjective map with pointwise periodic specification property is Devaney chaotic.  
\end{Corollary}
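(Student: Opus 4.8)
The plan is to verify the three clauses defining Devaney chaos for such an $f$, one at a time, using the three results that immediately precede this corollary; the only genuinely non-formal point will be upgrading the \emph{pointwise} sensitivity that those results provide to a single (uniform) sensitivity constant. For transitivity, note that a periodic specification point is in particular a specification point, so the hypothesis is that $f$ has the pointwise specification property; hence by Corollary \ref{4.9} (equivalently, apply Theorem \ref{4.8} at every point and use that a topologically mixing point is topologically transitive) the map $f$ is topologically mixing, and a fortiori topologically transitive. For dense periodicity, apply the preceding theorem --- a periodic specification point of a continuous surjective map has a dense set of periodic points at it --- at every point of $X$: then each nonempty open set meets $Per(f)$, so $Per(f)$ is dense in $X$. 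The degenerate case $|X|=1$ can be set aside at the start, since as soon as $|X|\geq 2$ the pointwise periodic specification property already forces $X$ to be infinite (a topologically mixing map on a finite, hence discrete, space is impossible).

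The remaining and, to my mind, decisive step is to extract one sensitivity constant that works at every base point; the paper itself records that pointwise sensitivity is not known to imply sensitivity, so something must be said. Here the classical two-orbits device applies. Since $Per(f)$ is dense and $X$ is infinite, choose a periodic point $p_1$; its orbit $\mathcal{O}(p_1)$ is finite, hence closed, so the nonempty open set $X\setminus\mathcal{O}(p_1)$ contains a periodic point $p_2$, and then $\mathcal{O}(p_1)\cap\mathcal{O}(p_2)=\emptyset$. Put $8\delta=d(\mathcal{O}(p_1),\mathcal{O}(p_2))>0$. For every $x\in X$ the triangle inequality forces $d(x,\mathcal{O}(q))\geq 4\delta$ for at least one $q\in\lbrace p_1,p_2\rbrace$; feeding this particular $q$ and the constant $\eta=\delta$ into the argument of the preceding sensitivity theorem --- which uses only topological transitivity at $x$ and density of periodic points at $x$, both now in hand, to bring a point of a prescribed neighbourhood of $x$ into a small neighbourhood of $\mathcal{O}(q)$ --- yields a point $y$ in that neighbourhood and an integer $n\geq 1$ with $d(f^n(x),f^n(y))>\delta$. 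Since $\delta$ does not depend on $x$, it is a sensitivity constant, so $f$ is sensitive.

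Combining the three steps, $f$ is topologically transitive, has a dense set of periodic points, and is sensitive, hence Devaney chaotic. I expect the first two steps to be essentially bookkeeping on top of the earlier theorems; the real work is the sensitivity step, and the crucial idea there is to replace the base-point-dependent auxiliary periodic orbit used in the earlier argument by one chosen from a fixed pair of disjoint periodic orbits, whose existence is guaranteed by density of periodic points on the (necessarily infinite) space $X$.
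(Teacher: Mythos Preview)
The paper gives no explicit proof of this corollary; it is stated bare, immediately after the three theorems establishing that (a) specification points are topologically mixing points, (b) periodic specification points of a surjective map have dense periodic points at them, and (c) a topologically transitive point with dense periodic points at it is a sensitive point. Your first two steps are exactly what that sequence suggests, and they are fine.

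You are also right to flag the third step as the only substantive one: the paper's own pointwise sensitivity theorem yields only \emph{pointwise} sensitivity, and the paper explicitly says it does not know whether that upgrades to sensitivity. Your fix --- choose two disjoint periodic orbits once and for all and rerun the argument with whichever is far from the base point --- is precisely the Banks--Brooks--Cairns--Davis--Stacey device. That reference appears in the paper's bibliography as \cite{DDC} but is never cited in the text; the most plausible reading is that the authors' intended proof is to first get global transitivity and global density of periodic points from the preceding theorems, and then simply invoke \cite{DDC} for sensitivity. So your approach and the paper's (implicit) approach coincide; you have just unpacked the citation rather than quoting it. One small remark: the degenerate case $|X|=1$ that you set aside is genuinely excluded by the conclusion (a one-point system is not sensitive), so strictly speaking the corollary carries the tacit hypothesis $|X|\geq 2$; your observation that pointwise specification then forces $X$ to be infinite is correct and is what makes the two-orbit argument available.
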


Recall that shadowable points for homeomorphisms on compact metric space is defined by Morales in \cite{MSP}, in which it has been proved that every homeomorphism on compact metric space has shadowing if and only if every point is a shadowable point. By Remark 1.1 \cite{MSP}, the definition can extended to continuous map as follows. 
\medskip 

A sequence $\lbrace x_{i}\rbrace_{i\in \mathbb{N}}$ is said to be through a subset $B$ of $X$ if $x_{0}\in X$. A point $x\in X$ is said to be shadowable point for $f$ if for every $\epsilon > 0$ there exists a $\delta > 0$ such that every $\delta$-pseudo orbit through $x$ can be $\epsilon$-traced. Following similar steps as in Theorem 1.1 \cite{MSP}, we can prove the following.  
\medskip

\begin{theorem} If $f$ is continuous onto map on compact metric space, then $f$ has shadowing if and only if every point is a shadowable point.
\end{theorem}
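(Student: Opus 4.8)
The forward implication is immediate and I would dispose of it in one line: if $f$ has shadowing then, for a given $\epsilon>0$, the $\delta>0$ furnished by the shadowing property witnesses shadowability simultaneously at every point, since a $\delta$-pseudo orbit through a point is in particular a $\delta$-pseudo orbit. So the real content is the converse, and my plan is to run a finite-subcover argument in the spirit of Theorem 1.1 of \cite{MSP}.

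Fix $\epsilon>0$. For each $x\in X$, shadowability of $x$ gives $\gamma_x>0$ so that every $\gamma_x$-pseudo orbit through $x$ is $\tfrac{\epsilon}{2}$-traced; using continuity of $f$ at $x$ I would then shrink to a constant $0<\delta_x\leq\min\{\gamma_x/2,\ \epsilon/2\}$ with the extra property that $d(x,z)<\delta_x$ implies $d(f(x),f(z))<\gamma_x/2$. Compactness of $X$ lets me extract a finite subcover $B_{\delta_{x_1}}(x_1),\dots,B_{\delta_{x_n}}(x_n)$ from $\{B_{\delta_x}(x):x\in X\}$; I would set $\delta=\min_{1\leq j\leq n}\delta_{x_j}>0$ and claim this $\delta$ witnesses shadowing for $\epsilon$.

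To verify the claim, take a $\delta$-pseudo orbit $\{y_i\}_{i\in\mathbb{N}}$, pick $j$ with $y_0\in B_{\delta_{x_j}}(x_j)$, and splice in the base point: put $z_0=x_j$ and $z_i=y_i$ for $i\geq1$. For $i\geq1$ the pseudo-orbit estimate $d(f(z_i),z_{i+1})=d(f(y_i),y_{i+1})<\delta\leq\gamma_{x_j}$ is free, while at $i=0$ one has $d(f(z_0),z_1)\leq d(f(x_j),f(y_0))+d(f(y_0),y_1)<\gamma_{x_j}/2+\delta\leq\gamma_{x_j}$, the first summand being controlled precisely by the extra property built into $\delta_{x_j}$. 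Thus $\{z_i\}$ is a $\gamma_{x_j}$-pseudo orbit through $x_j$, so it is $\tfrac{\epsilon}{2}$-traced by some $w$; then $d(f^i(w),y_i)<\epsilon/2<\epsilon$ for $i\geq1$ and $d(w,y_0)\leq d(w,x_j)+d(x_j,y_0)<\epsilon/2+\delta_{x_j}\leq\epsilon$, so $w$ $\epsilon$-traces $\{y_i\}$ and we are done.

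The compactness extraction and the triangle-inequality bookkeeping are routine; the one delicate point — and the place where care is needed — is the index-$0$ adjustment. An arbitrary $\delta$-pseudo orbit need not meet any of the finitely many ``good'' base points $x_j$, so I have to replace its initial term by such an $x_j$ and then absorb the discontinuity this creates at step $0$, which is exactly why each $\delta_x$ must be chosen small enough to also control $d(f(x),f(\cdot))$ near $x$, not merely small enough for the covering. Surjectivity of $f$ is not used in this one-sided argument; it is retained only for consistency with the surjective setting in which shadowable points were introduced.
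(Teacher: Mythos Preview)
Your argument is correct and is precisely the finite-subcover adaptation of Morales's Theorem~1.1 in \cite{MSP} that the paper invokes; the paper itself gives no proof beyond the remark ``following similar steps as in Theorem~1.1 \cite{MSP}'', so you have in fact written out exactly what the authors defer. The only comment is that your observation about surjectivity being unused is accurate for the one-sided setting and worth noting.
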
  

\begin{theorem}
If $f$ is a topologically mixing continuous map on a totally bounded metric space $X$, then every shadowable point is a specification point. 
\label{4.13}
\end{theorem}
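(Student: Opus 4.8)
The plan is to run, relative to the single point $x$, the classical argument that produces the specification property from shadowing together with topological mixing. Fix $\epsilon>0$. Since $x$ is a shadowable point, choose $\delta>0$ so that every $\delta$-pseudo orbit $\{z_i\}_{i\in\mathbb{N}}$ with $z_0=x$ is $\epsilon$-traced by some point of $X$. Because $X$ is totally bounded, fix a finite $\tfrac{\delta}{4}$-net, giving nonempty open balls $B(c_1,\tfrac{\delta}{4}),\dots,B(c_r,\tfrac{\delta}{4})$ that cover $X$. By topological mixing, for each ordered pair $(l,m)$ there is $N_{lm}\in\mathbb{N}$ with $f^{n}(B(c_l,\tfrac{\delta}{4}))\cap B(c_m,\tfrac{\delta}{4})\neq\emptyset$ for all $n\geq N_{lm}$; put $N=\max\{1,\max_{l,m}N_{lm}\}$ and declare $M(\epsilon)=N+1$.

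Now let $x=x_1,x_2,\dots,x_k$ be a finite sequence and $0\leq a_1\leq b_1<a_2\leq b_2<\dots<a_k\leq b_k$ integers with $a_j-b_{j-1}\geq M(\epsilon)$ for $2\leq j\leq k$. I would assemble a $\delta$-pseudo orbit $\{z_i\}_{i\in\mathbb{N}}$ that coincides with the genuine orbit of $x_j$ on each window $[a_j,b_j]$ and uses mixing to bridge consecutive windows. Set $z_i=f^{i}(x_1)$ for $0\leq i\leq b_1$; this is a genuine orbit segment and it forces $z_0=x_1=x$. For the bridge from window $j$ to window $j+1$, let $g=a_{j+1}-b_j$ $(\geq M(\epsilon)=N+1)$, pick a net-ball $B(c_l,\tfrac{\delta}{4})$ containing $f^{b_j+1}(x_j)=f(z_{b_j})$ and a net-ball $B(c_m,\tfrac{\delta}{4})$ containing $f^{a_{j+1}}(x_{j+1})$, and use mixing (legitimate since $g-1\geq N\geq N_{lm}$) to obtain $u$ with $u\in B(c_l,\tfrac{\delta}{4})$ and $f^{\,g-1}(u)\in B(c_m,\tfrac{\delta}{4})$. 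Put $z_{b_j+1+s}=f^{s}(u)$ for $0\leq s\leq g-2$ and $z_{a_{j+1}}=f^{a_{j+1}}(x_{j+1})$, then continue with $z_i=f^{i}(x_{j+1})$ on $[a_{j+1},b_{j+1}]$; finally extend the sequence beyond $b_k$ by $z_i=f^{\,i-b_k}(z_{b_k})$. Every consecutive pair with $f(z_i)=z_{i+1}$ contributes a zero jump, while the two special jumps per bridge are $d(f(z_{b_j}),z_{b_j+1})=d(f^{b_j+1}(x_j),u)<\tfrac{\delta}{2}$ (both points lie in $B(c_l,\tfrac{\delta}{4})$) and $d(f(z_{a_{j+1}-1}),z_{a_{j+1}})=d(f^{\,g-1}(u),f^{a_{j+1}}(x_{j+1}))<\tfrac{\delta}{2}$ (both lie in $B(c_m,\tfrac{\delta}{4})$). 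Hence $\{z_i\}$ is a $\delta$-pseudo orbit through $x$.

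By the choice of $\delta$ there is $y\in X$ with $d(f^{i}(y),z_i)<\epsilon$ for all $i\in\mathbb{N}$. For each $j$ and each $a_j\leq i\leq b_j$ we have $z_i=f^{i}(x_j)$, so $d(f^{i}(y),f^{i}(x_j))<\epsilon$ for $a_j\leq i\leq b_j$ and $1\leq j\leq k$. As $\epsilon>0$ and the orbit data were arbitrary, $M(\epsilon)$ witnesses that $x$ is a specification point, which finishes the argument.

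The delicate part, which I expect to be the real obstacle, is the bridging step: since $f$ is only assumed continuous (not uniformly continuous), the pseudo orbit cannot be allowed to land merely $\tfrac{\delta}{2}$-close to $f^{a_{j+1}}(x_{j+1})$ and then drift over the (arbitrarily long) next window; it must resume \emph{exactly} on the genuine orbit of $x_{j+1}$ at time $a_{j+1}$, so the whole $\delta$ error budget has to be absorbed by the single jump into $f^{a_{j+1}}(x_{j+1})$. The finite $\tfrac{\delta}{4}$-net furnished by total boundedness is precisely what makes one uniform bridge length $N$ work for every target point, because landing in the same net-ball as $f^{a_{j+1}}(x_{j+1})$ keeps that last jump below $\delta$ no matter which target occurs. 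One should also separately check the degenerate cases $k=1$, windows of length zero, and $a_1=0$, but each of these is immediate from the construction.
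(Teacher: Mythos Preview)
Your proof is correct and follows essentially the same route as the paper's: use total boundedness to obtain a finite net, invoke topological mixing to get a uniform bridge time between net balls, concatenate orbit segments with bridges into a $\delta$-pseudo orbit through $x$, and shadow. The only cosmetic differences are that you start each bridge at index $b_j+1$ (so the pseudo orbit exactly equals $f^{i}(x_j)$ on $[a_j,b_j]$ and plain $\epsilon$-tracing suffices), whereas the paper starts the bridge at $b_j$, uses $\tfrac{\epsilon}{2}$-tracing with $\delta<\tfrac{\epsilon}{2}$, and finishes via the triangle inequality.
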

\begin{proof}
Let $\epsilon > 0$ and choose $0 <\delta < \frac{\epsilon}{2}$ such that every $\delta$-pseudo orbit through $x$ can be $\frac{\epsilon}{2}$-traced. Since $X$ is totally bounded, there exists $\lbrace z_{1}, z_{2}, . . .,z_{m}\rbrace \subset X$ such that $X = \cup_{i=1}^{m}B_{\frac{\delta}{2}}(z_{i})$. Since $f$ is topologically mixing, there exists $M \in \mathbb{N}^{+}$ such that $f^{n}(B_{\frac{\delta}{2}}(z_{i})) \cap B_{\frac{\delta}{2}}(z_{j}) \neq \phi$ for all $n\geq M$ and all $1\leq i, j\leq m$. 
Choose $0\leq a_{1}\leq b_{1} < a_{2}\leq b_{2} < . . . < a_{k}\leq b_{k}$, with $a_{j} - b_{j-1}\geq M$ for all $2\leq j\leq k$ and $x = x_{1}, x_{2}, . . ., x_{k}$.	Let $B(z)$ denotes a set in $\lbrace B_{\frac{\delta}{2}}(z_{i})\rbrace$ containing $z$. Since $a_{j} - b_{j-1} \geq M$ for all $2\leq j \leq k$, by topologically mixing of $f$ we can choose $y_{j}\in B(f^{b_{j}}(x_{j}))$ such that $f^{a_{j}-b_{j-1}}(y_{j})\in B(f^{a_{j+1}}(x_{j+1}))$. We define a sequence 
\[   
v_{i} = 
     \begin{cases}
       f^{i}(x) & 0\leq i < a_{1}\\
       f^{i}(x_{j}) & a_{j}\leq i < b_{j} \\
       f^{i - b_{j}}(y_{j}) & b_{j} \leq i < a_{j+1} \\ 
       f^{i}(x_{k}) & i \geq b_{k} \\ 
     \end{cases}
\]

Clearly, $\lbrace v_{i}\rbrace_{i\in \mathbb{N}}$ forms a $\delta$-pseudo orbit of $f$ through $x$. Hence there exists $w\in X$ such that $d(f^{i}(w), v_{i}) < \frac{\epsilon}{2}$ for all $i\in \mathbb{N}$,  which implies that $d(f^{i}(w), f^{i}(x_{j})) < \epsilon$ for all $a_{j}\leq i\leq b_{j}$ and all $1\leq j\leq k$. Hence $x$ is a specification point.
\end{proof}

\begin{Corollary}
For every uniformly continuous surjective map with the shadowing property on a totally bounded metric space, pointwise specification property implies specification property. 
\end{Corollary}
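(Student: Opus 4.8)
The plan is to reduce the statement to Corollary \ref{4.9} together with the construction used to prove Theorem \ref{4.13}. First I would observe that the pointwise specification hypothesis already forces $f$ to be topologically mixing: this is exactly Corollary \ref{4.9}. Thus $f$ is a topologically mixing continuous map on the totally bounded space $X$ which additionally has the shadowing property, and the only thing left is to promote the conclusion of Theorem \ref{4.13} — a specification constant at each individual shadowable point — to a single specification constant valid for all finite orbit segments at once, i.e.\ to the specification property.

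For this I would rerun the proof of Theorem \ref{4.13} essentially verbatim, with the one change that the tracing step now invokes the \emph{global} shadowing property of $f$ rather than shadowability at a fixed point. Explicitly, given $\epsilon>0$: the shadowing property yields $0<\delta<\epsilon/2$ such that \emph{every} $\delta$-pseudo orbit is $\epsilon/2$-traced; total boundedness yields a finite set $\lbrace z_1,\dots,z_m\rbrace$ with $X=\bigcup_{i=1}^{m}B_{\delta/2}(z_i)$; and topological mixing yields $M=M(\epsilon)\in\mathbb{N}^{+}$ with $f^{n}(B_{\delta/2}(z_i))\cap B_{\delta/2}(z_j)\neq\phi$ for all $n\geq M$ and all $1\leq i,j\leq m$. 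None of these quantities depends on a chosen base point. Then, given any finite sequence $x_1,\dots,x_k$ in $X$ and integers $0\leq a_1\leq b_1<\dots<a_k\leq b_k$ with $a_j-b_{j-1}\geq M(\epsilon)$ for all $j$, I would build the concatenated sequence $\lbrace v_i\rbrace_{i\in\mathbb{N}}$ exactly as in Theorem \ref{4.13} (picking the linking points $y_j$ via topological mixing), check that it is a $\delta$-pseudo orbit, and apply the shadowing property to obtain $w\in X$ with $d(f^{i}(w),v_i)<\epsilon/2$ for all $i$. This gives $d(f^{i}(w),f^{i}(x_j))<\epsilon$ for all $a_j\leq i\leq b_j$ and all $1\leq j\leq k$, and since $M(\epsilon)$ was produced before the sequence was given, $f$ has the specification property.

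The main (and essentially only) point requiring care is the one just stressed: Theorem \ref{4.13} is phrased pointwise because its proof uses only shadowability at the single point $x$, whereas here the full shadowing hypothesis must be used at the tracing step so that the constant $M(\epsilon)$ is uniform over all points and the tracing point $w$ exists for \emph{arbitrary} finite orbit segments. With that substitution the argument is a direct transcription of the proof of Theorem \ref{4.13}, so no new estimates are needed; uniform continuity and surjectivity of $f$ play no role beyond what is already invoked in Corollary \ref{4.9} and Theorem \ref{4.13}.
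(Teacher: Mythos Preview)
Your proposal is correct and follows the same logical route as the paper: use Corollary \ref{4.9} to obtain topological mixing from pointwise specification, and then combine mixing with the (global) shadowing hypothesis on a totally bounded space to obtain the specification property. The only difference is presentational: the paper outsources the second step to the Main Theorem of \cite{PT} (which is precisely the statement that shadowing plus topological mixing yields specification for uniformly continuous surjective maps on totally bounded spaces), whereas you reprove that implication in place by rerunning the construction of Theorem \ref{4.13} with global shadowing replacing shadowability at a point. Your observation that this substitution is the sole change needed, and that it makes $M(\epsilon)$ uniform over all choices of $x_1,\dots,x_k$, is exactly the point.
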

\begin{proof}
Proof follows from the Main Theorem of \cite{PT}, Corollary \ref{4.9} and Theorem \ref{4.13}. 
\end{proof}

\begin{theorem}
Let $f$ be a uniformly continuous surjective map on $X$. If $f$ has two distinct specification points, then $f$ has positive Bowen entropy.   
\label{4.15}
\end{theorem}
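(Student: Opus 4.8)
The plan is to encode two-letter words by orbit segments extracted from the specification at the two given points, and thereby produce, at infinitely many scales, exponentially large separated sets. Fix distinct specification points $p,q$ of $f$ and set $\epsilon_{0}=\tfrac{1}{4}d(p,q)$, so that any point within $\epsilon_{0}$ of $p$ and any point within $\epsilon_{0}$ of $q$ lie more than $\epsilon_{0}$ apart. Let $M$ be a positive integer that works simultaneously in the specification property at $p$ and at $q$ for this $\epsilon_{0}$ (take the larger of the two integers furnished by the two points).

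For each $n\in\mathbb{N}^{+}$ and each word $w=(w_{1},w_{2},\dots,w_{n})\in\{p,q\}^{n}$ I would manufacture a point $y_{w}\in X$ as follows. Apply the specification at the anchor $w_{1}$ --- recall that the first term of an admissible sequence must be the specification point itself, which is exactly why having two specification points lets $w_{1}$ range over $\{p,q\}$. Use the single-time windows $a_{j}=b_{j}=(j-1)M$, $1\leq j\leq n$, which satisfy $a_{j}-b_{j-1}=M$ for $2\leq j\leq n$. Since $f$ is \emph{surjective}, so is $f^{(j-1)M}$, hence for $2\leq j\leq n$ I may choose $x_{j}$ with $f^{(j-1)M}(x_{j})=w_{j}$; set $x_{1}=w_{1}$. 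The specification at $w_{1}$ then yields $y_{w}$ with $d(f^{(j-1)M}(y_{w}),w_{j})<\epsilon_{0}$ for all $1\leq j\leq n$. The reason preimages are used is precisely this: the orbit of $y_{w}$ is forced within $\epsilon_{0}$ of the letter $w_{j}\in\{p,q\}$ itself at time $(j-1)M$, not merely near $f^{(j-1)M}(w_{j})$; the points $f^{(j-1)M}(p)$ and $f^{(j-1)M}(q)$ might be arbitrarily close, which would ruin the separation, so surjectivity is indispensable here.

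Put $F_{n}=\{y_{w}:w\in\{p,q\}^{n}\}$. If $w\neq w'$ and $j_{0}$ is the least index with $w_{j_{0}}\neq w'_{j_{0}}$, say $w_{j_{0}}=p$ and $w'_{j_{0}}=q$, then $d(f^{(j_{0}-1)M}(y_{w}),p)<\epsilon_{0}$ and $d(f^{(j_{0}-1)M}(y_{w'}),q)<\epsilon_{0}$, so the triangle inequality gives $d(f^{(j_{0}-1)M}(y_{w}),f^{(j_{0}-1)M}(y_{w'}))\geq d(p,q)-2\epsilon_{0}=2\epsilon_{0}>\epsilon_{0}$. As $(j_{0}-1)M\leq(n-1)M$, this yields $d_{(n-1)M+1}(y_{w},y_{w'})>\epsilon_{0}$; in particular the points $y_{w}$ are pairwise distinct and $F_{n}$ is an $((n-1)M+1,\epsilon_{0})$-separated subset of $X$ of cardinality $2^{n}$. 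Taking $K$ to be the closure of $\bigcup_{n\in\mathbb{N}^{+}}F_{n}$ (all the $y_{w}$ lie in the $\epsilon_{0}$-neighbourhood of $\{p,q\}$), one has $s_{(n-1)M+1}(\epsilon_{0},K)\geq 2^{n}$ for every $n$, hence
\[
s(\epsilon_{0},K)=\limsup_{m\to\infty}\frac{1}{m}\log s_{m}(\epsilon_{0},K)\geq\limsup_{n\to\infty}\frac{n\log 2}{(n-1)M+1}=\frac{\log 2}{M}>0,
\]
and therefore $h(f)\geq\lim_{\epsilon\to 0}s(\epsilon,K)\geq s(\epsilon_{0},K)\geq(\log 2)/M>0$.

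The step I expect to need the most care is the last one: all the finite sets $F_{n}$ must be placed inside a single compact $K\in\mathcal{K}(X)$, since any individual finite $F_{n}$ contributes nothing to $h(f)$. When $X$ is compact one simply takes $K=X$, and this is the classical situation; for a general metric space the tracing points have to be confined --- for instance by imposing an additional bounded-orbit constraint in the construction and extracting a compact limit set, using the uniform continuity of $f$ --- and verifying that the confinement does not disturb the coding is the one genuinely delicate point. The remaining ingredients --- the choice $\epsilon_{0}=\tfrac{1}{4}d(p,q)$, the placement of the windows along the arithmetic progression $\{(j-1)M:1\leq j\leq n\}$, and the final estimate $h(f)\geq(\log 2)/M$ --- are routine bookkeeping once the surjectivity--preimage device is in place.
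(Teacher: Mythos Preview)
Your argument is essentially the same as the paper's: encode binary words at two specification points, place single-time windows along the arithmetic progression $jM$, use surjectivity to pull the target letters back so that the tracing orbit visits $\epsilon$-neighbourhoods of $p$ and $q$ themselves (not their forward images), and conclude $h(f)\geq(\log 2)/M$. Your execution is in fact tidier --- you invoke preimages uniformly at every coordinate, whereas the paper first argues distinctness from the initial letter and only appends a preimage coordinate to separate tuples sharing the same first letter; your treatment of the $((n-1)M+1,\epsilon_{0})$-separation is also more explicit than the paper's.

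The one point you correctly flag as delicate --- locating all the $F_{n}$ inside a single compact $K$ when $X$ is not assumed compact --- is simply not addressed in the paper's proof either: there the symbol $K$ appears in the final chain of inequalities without ever being specified. So your concern is legitimate, but it is a shared lacuna rather than a defect of your proposal relative to the paper.
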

\begin{proof}
Let $x \neq y$ be two distinct specification points of $f$. Let $\epsilon > 0$ be such that $d(x, y) > 3\epsilon$. Further, let $M(\epsilon) = M$ be a positive integer given by the specification points $x$ and $y$ of $f$. Consider two distinct $(n+1)$-tuples, $(z_{0}, z_{1}, . . ., z_{n})$ and $(z'_{0}, z'_{1}, . . ., z'_{n})$ with $z_{0} = x$, $z'_{0}= y$, $z_{i}, z'_{i}\in \lbrace x, y\rbrace$ for all $1\leq i\leq n$. Choose $a_{0} = 0 = b_{0}$, $a_{1} = M = b_{1}, . . ., a_{n} = nM = b_{n}$. Then, there exist $z ,z'\in X$ corresponding to the specification points $x$ and $y$ respectively. Observe that $z \neq z'$, otherwise $d(f^{i}(z), f^{i}(z_{i})) < \epsilon$ and $d(f^{i}(z), f^{i}(z'_{i})) < \epsilon$ for all $a_{j}\leq i\leq b_{j}$, $0\leq j\leq n$. In particular, $d(x, y)< 2\epsilon$, a contradiction. 
Now suppose that $z_{0} = z'_{0}$. Since $f$ is onto, fix $z_{n+1}\in f^{-(n+1)M}(x)$, $z'_{n+1}\in f^{-(n+1)M}(y)$. Choose $a_{n+1} = (n+1)M = b_{n+1}$, $(z_{0}, z_{1}, . . ., z_{n}, z_{n+1})$ and $(z'_{0}, z'_{1}, . . ., z'_{n}, z'_{n+1})$. Let $z, z'\in X$ such that $d(f^{i}(z), f^{i}(z_{i})) < \epsilon$ and $d(f^{i}(z'), f^{i}(z'_{i})) < \epsilon$ for all $a_{j}\leq i\leq b_{j}$, $0\leq j\leq (n+1)$. If $z = z'$ then $d(x, y) = d(f^{(n+1)M}(z_{n+1}), f^{(n+1)M}(z'_{n+1})) \leq d(f^{(n+1)M}(z_{n+1}), f^{(n+1)M}(z)) + d(f^{(n+1)M}(z), f^{(n+1)M}(z'_{n+1})) < 2\epsilon$, a  contradiction. Hence for distinct $2^{n+1}$-tuples, we can choose $2^{n+1}$-distinct tracing points. 
Similarly, if $d(f^{i}(z), f^{i}(z')) < \epsilon$ for all $0\leq i\leq (n+1)M$, where $z$ and $z'$ are tracing points of above sequences then $d(x, y) < 3\epsilon$, a contradiction. 
Hence there exists atleast $2^{n+1}$ points which are $((n+1)M, \epsilon)$-separated. Hence,

\begin{align*}
h(f, X) &= sup_{K\in \mathcal{K(X)}} \lim_{\epsilon\rightarrow 0} s(\epsilon, K) \\
&\geq \lim_{\epsilon\rightarrow 0} s(\epsilon, K) \\
& \geq \limsup_{n\rightarrow \infty}\frac{1}{n}log s_{n}(\epsilon, K) \\
&\geq \limsup_{n\rightarrow \infty} \frac{1}{(n+1)M}log(2^{n+1}) \\ 
&= \frac{log2}{M} > 0
\end{align*}
\end{proof}

\begin{Corollary}
Every topologically mixing uniformly continuous surjective map having atleast two distinct shadowable points on a totally bounded metric space has positive Bowen entropy.
\end{Corollary}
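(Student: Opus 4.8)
The plan is to obtain this corollary as an immediate consequence of Theorem~\ref{4.13} and Theorem~\ref{4.15}, so that essentially no new argument is needed beyond checking that the hypotheses line up. Let $f$ be a topologically mixing, uniformly continuous, surjective map on a totally bounded metric space $X$, and let $x\neq y$ be two distinct shadowable points of $f$.

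First I would apply Theorem~\ref{4.13}. Since $f$ is a topologically mixing continuous map on the totally bounded space $X$, that theorem guarantees that \emph{every} shadowable point of $f$ is a specification point. Applying this separately to $x$ and to $y$ shows that both are specification points of $f$; and since $x$ and $y$ were distinct to begin with, $f$ has two distinct specification points. Then I would apply Theorem~\ref{4.15}: $f$ is uniformly continuous and surjective and now possesses two distinct specification points, hence $h(f)>0$, i.e. $f$ has positive Bowen entropy.

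I do not expect any genuine obstacle here, as both ingredients are already in hand; the only point requiring care is bookkeeping of hypotheses — topological mixing, total boundedness and continuity are exactly what Theorem~\ref{4.13} consumes, while uniform continuity and surjectivity are exactly what Theorem~\ref{4.15} consumes, and the statement of the corollary supplies all of these. One should also note the (trivial) observation that the two shadowable points remain distinct when viewed as specification points, since they are literally the same two points, so the input ``two distinct specification points'' required by Theorem~\ref{4.15} is met.
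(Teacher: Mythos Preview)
Your proposal is correct and is exactly the intended argument: the paper states this corollary without proof precisely because it is the immediate combination of Theorem~\ref{4.13} and Theorem~\ref{4.15} that you describe. Your hypothesis bookkeeping is accurate and nothing further is needed.
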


\begin{Example}
Let $f : [0, \infty) \rightarrow [0, \infty)$ be defined by $f(x)=2x$. Note that $f$ has positive Bowen entropy and has shadowing property. It is easy to see that, $y=0$ is the only topologically mixing point and hence, $f$ can have at most one specification point. Thus, two distinct points is not necessary for a uniformly continuous surjective map to have positive Bowen entropy. In fact, one can observe that $f$ has no specification point. Hence, a point which is both topologically mixing point and shadowable point need not be a specification point.  
\end{Example}

\begin{Example}
Let $f : [0, 1] \rightarrow [0, 1]$ be defined as $f(x) = x^{2}$. Note that $f$ has zero Bowen entropy and has shadowing property. It is easy to see that $y = 1$ is the only topologically mixing point. In fact $y=1$ is a shadowable point but not a specification point. Thus, topologically mixing and shadowable point on a totally bounded metric space need not be a specification point. 
\end{Example}
\medskip

Acknowledgements: First author is supported by Department of Science and Technology, Government of India, under INSPIRE Fellowship (Resgistration No.- IF150210) Program. Second author is supported by CSIR-Junior Research Fellowship (File No.-09/045(1558)/2018-EMR-I) of Government of India.

\end{document}